\numberwithin{equation}{section}
\numberwithin{figure}{section}
\theoremstyle{plain}
\newtheorem{thm}{Theorem}[section]
\newtheorem{prop}[thm]{Proposition}
\newtheorem{definition}[thm]{Definition}
\newtheorem{lem}[thm]{Lemma}
\newtheorem{cor}[thm]{Corollary}
\newtheorem{rem}[thm]{Remark}
  \newcounter{casectr}
\theoremstyle{definition}
\theoremstyle{remark}
\newcommand{\R}{\mathbb{R}}
\def \bR {\Bbb R}
\def \bN {\Bbb N}
\def \bZ {\Bbb Z}
\def \bT {\Bbb T}
\def \l {\left}
\def \r {\right}
\def \lea {\lesssim}
\def \leaa {\lessapprox}
\def \gea {\gtrsim}
\def \w {\widetilde}
\def \p {^\prime}
\def \ed {e^{it\Delta}}
\def \ca {\mathbf{1}}
\begin{document}

\address{Yangkendi Deng
\newline \indent Department of Mathematics and Statistics, Beijing Institute of Technology.
\newline \indent Key Laboratory of Algebraic Lie Theory and Analysis of Ministry of Education.
\newline \indent  Beijing, China. \indent}
\email{dengyangkendi@bit.edu.cn}

\address{Boning Di
\newline \indent School of Mathematical Sciences, Beijing University of Posts and Telecommunications, Beijing, China
\newline \indent Key Laboratory of Mathematics and Information Networks (Beijing University of Posts and Telecommunications), Ministry of Education, Beijing, China}
\email{diboning@bupt.edu.cn}

\address{Jiao Ma
\newline \indent School of Mathematics and Physics, University of Science and Technology Beijing, Beijing, China. }
\email{majiao191@mails.ucas.ac.cn}

\address{Dunyan Yan
\newline \indent School of Mathematical Sciences, University of Chinese Academy of Sciences, Beijing, China}
\email{ydunyan@ucas.ac.cn}

	\address{Kailong Yang
		\newline \indent National Center for Applied Mathematics in Chongqing, Chongqing Normal University, Chongqing  \\
		\indent China. }
	\email{ykailong@mail.ustc.edu.cn}

\title{Long-time Strichartz estimates on 3D waveguide with applications}
\author{Yangkendi Deng, Boning Di, Jiao Ma, Dunyan Yan, Kailong Yang}

\maketitle

\begin{abstract}
We study long-time Strichartz estimates for the  Schr\"{o}dinger equation on waveguide manifolds, and use them to establish upper bounds on the growth of Sobolev norms for the nonlinear  Schr\"{o}dinger equation on three-dimensional waveguides.

\end{abstract}

\noindent \textbf{Keywords}: Long-time Strichartz estimates, Schr\"{o}dinger equation, waveguide manifold, growth of Sobolev norms for NLS.


\section{Introduction}

\subsection{Background and Motivations}
Strichartz estimates are useful tools to study dispersive equations. A wide range of research concerning Strichartz estimates on a variety of manifolds has attracted attention over the past few decades, as it helps us to study the behavior of related dispersive equations on these manifolds. The classical Strichartz estimates for Schr\"{o}dinger equation on $\bR^d$ (\cite{Strichartz1977}) read as
$$\| \ed \phi \|_{L^{\frac{2(d+2)}{d}}_{t,x} (\bR\times\bR^d )}\lea \|\phi\|_{L^2(\bR^d)}.$$
For torus $\bT^d=(\bR/\bZ)^d$, it holds locally in time with an additional $\varepsilon$-derivative loss(\cite{BD2015}), it reads as
$$\| \ed P_{\le N}\phi \|_{L^{\frac{2(d+2)}{d}}_{t,x} ([0,1]\times\bT^d )}\lea _{\varepsilon} N^{\varepsilon} \|\phi\|_{L^2(\bT^d)}.$$

Waveguide manifolds, of the form $\mathbb{R}^m \times \mathbb{T}^n$, are product spaces of Euclidean space with a torus. The study on waveguide manifolds is particularly interesting due to the unique combination of properties inherited from both Euclidean spaces and tori, offering deeper insights into the underlying physics. For the parallel estimates on 2D waveguide $\bR \times \bT$, Takaoka and Tzvetkov \cite{TT2001} gave sharp estimate
$$\| \ed \phi\|_{L^{4}_{t,x} ([0,1]\times\bR\times \bT )}\lea   \|\phi\|_{L^2(\bR\times \bT)}.$$
The estimate on $\mathbb{R} \times \mathbb{T}$ is different from the one on Euclidean space (which holds for all time) and also different from the one on a pure torus (which needs a frequency cut and has a derivative loss). It is still unknown whether similar estimates—without frequency cut or derivative loss and on the unit time scale—hold for higher-dimensional waveguides like $\mathbb{R}^m \times \mathbb{T}^n$ (with $m+n\ge 3$), or even for $\mathbb{R}^{d-1} \times \mathbb{T}$ when $d\ge 3$.

A key result is the global-in-time Strichartz estimate for Schr\"odinger equations on $\mathbb{R}^m \times \mathbb{T}^n$, proved by Barron \cite{Barron2021} with an additional $\varepsilon$-derivative loss at the endpoint exponent. This estimate reflects the dispersive effect contributed specifically by the Euclidean factor of $\mathbb{R}^m \times \mathbb{T}^n$.
The main result is the following. Let $\mathbb{T}^n$ be an $n$-dimensional rational or irrational torus. Assume $m\ge 1$, $n\ge 1$, and set $d=m+n$, $p^* =2+\frac{4}{d}$. For any $p>p^*$, let $q=\frac{4p}{m(p-2)}$ and $s=\frac{d}{2}-\frac{d+2}{p}$. If $q>2$, then estimate 
\begin{equation}\label{global in time strichartz estimate RmTn}
\Bigl(\sum_{\gamma \in \mathbb{Z}}\| e^{it\Delta} \phi \|_{L_{t,x}^p([\gamma,\gamma+1] \times \mathbb{R}^m\times \mathbb{T}^n )}^q \Bigr)^{\frac{1}{q}}\lesssim \| \phi \|_{H^s(\mathbb{R}^m\times \mathbb{T}^n)},
\end{equation}
 holds. Furthermore, when $p=p^*$, \eqref{global in time strichartz estimate RmTn} holds for $q=q(p^*)$ and any $s>0$.
 In another work, Barron, Christ and Pausader \cite{BCP2021} established global-in-time Strichartz estimates for the Schr\"odinger equation on the product space $\mathbb{R} \times \mathbb{T}$, overcoming the derivative loss at the endpoint. This corresponds to the case $m=n=1, p=4, q=8, s=0$ in estimate \eqref{global in time strichartz estimate RmTn}.

This paper addresses another type of long-time Strichartz estimate. In this context, a fundamental result was established by Deng, Germain, and Guth \cite{DGG2017} on a rectangular torus
$\mathbb{T}_{\ell}^d=[0,\ell_1]\times \cdots \times [0,\ell_d],$
\begin{equation}\label{DGG}
\|e^{it\Delta} P_{\le N} \phi\|_{L^p( [0,T]\times \mathbb{T}_\ell^d)}\lesssim_{\varepsilon} N^{\varepsilon}\big(T^{\frac{1}{p}}+N^{\frac{d}{2}-\frac{d+2}{p}}+N^{\frac{d}{2}-\frac{3d}{p}}T^{\frac{1}{p}}\big)\|\phi\|_{L^2( \mathbb{T}_\ell^d)},
\end{equation}
which holds for a generic choice of $\ell=(\ell_1,\cdots,\ell_d)$. More precisely, this corresponds to requiring that $\ell$ satisfy a Diophantine condition of the form
$$ \left|   \sum_{j=1}^d \frac{n_j}{\ell_j^2}   \right| \ge C^{-1} \left( \sum_{j=1}^d |n_j| \right)^{-(d-1+\delta)}, \quad \forall (n_1,\cdots, n_d)\in \bZ^d/\{0\}, $$
where $C, \delta>0$ are uniform constants.

The tori they considered are irrational. Analogously, the waveguide manifolds studied here represent a generalized form of such tori. This paper derives \eqref{DGG}-type estimates on three-dimensional waveguide manifolds and applies them to NLS problems.

Consider the nonlinear
Schr\"odinger equations (NLS) on the waveguides
$\mathbb{R}^m \times \mathbb{T}^n$:
\begin{equation}\label{nls1}
\begin{cases}
i\partial_t u + \Delta_{\mathbb{R}^m 
\times \mathbb{T}^n}  u = |u|^{\mu-1} u,
\\
u(0) = u_0\in H^s(\mathbb{R}^m
\times \mathbb{T}^n),
\end{cases}
\end{equation}
where
$\Delta_{\mathbb{R}^m  \times \mathbb{T}^n}
$ is the Laplace-Beltrami operator on
$\mathbb{R}^m \times \mathbb{T}^n$ and
$u : \mathbb{R} 
\times \mathbb{R}^m\times \mathbb{T}^n  
\to \mathbb{C}$ 
is a complex-valued function. This type of equation arises as a model
 in the study of nonlinear optics
 (propagation of laser beams through the 
 atmosphere or in a plasma), 
 especially in nonlinear optics of 
 telecommunications
\cite{S,SL}. The equation \eqref{nls1} admits the following conserved quantities:
\begin{align*}
\text{mass: }    
&\quad   M[u(t)]  = \int_{\mathbb{R}^m 
\times \mathbb{T}^n} |u(t,x)|^2\,\mathrm{d}x,\\
\text{   energy:  }     &  \quad E[u(t)]  
= \int_{\mathbb{R}^m \times \mathbb{T}^n} 
\frac12 |\nabla u(t,x)|^2  + \frac1{\mu+1} 
|u(t,x)|^{\mu+1} \,\mathrm{d}x.
\end{align*}

There has been a wide range of research concerning 
the well-posedness theory and long-time
 behavior of solutions for \eqref{nls1} on
$\bR^m \times \bT^n$. The Euclidean case, i.e.
$n=0$, is studied and the theory, at least in the defocusing setting, is well established (See \cite{I-team1,Dodson3,Taobook} and the references therein). Moreover, we refer to \cite{Bourgain1993NLS,DPST,herr2025global,HTT1,IPT3,kwak2024global,kwak2024critical} for some works on tori. Due to the nature of such product spaces, the techniques used in Euclidean and tori settings are frequently combined and applied to waveguide problems. 

The study of scattering phenomena and the growth of higher-order Sobolev norms constitutes a core pillar of long-time dynamical analysis for the NLS. Scattering describes a regime where linear dispersive decay dominates nonlinear interactions, ensuring nonlinear solutions asymptotically converge to free linear evolution as $t \to \pm \infty$. 
While the growth of higher-order Sobolev norms for solutions to the NLS quantifies the long-time transfer of energy from low to high frequencies.  Unlike conserved norms, the higher Sobolev norms $\|u(t)\|_{H^s}$, $s>1$, may evolve non-trivially. 
The unbounded polynomial growth of $H^s$ norms is dynamically precluded in scattering scenarios and typically reserved for non-scattering geometries (e.g., $\bT^d$). We note that,  although the NLS on waveguide manifolds inherits certain dispersive features from the Euclidean component—thereby 
expected the possibility of scattering in the case of $\frac{4}{m}\leq \mu-1\leq \frac{4}{m+n-2}$ (see \cite{HP,MR3406826} for explanations), it is also partially confined to a compact manifold, namely the torus. This geometric confinement inhibits dispersion along the compact directions and, as a result, precludes scattering behavior.

 The first breakthrough in the growth of higher-order Sobolev norms for NLS was made by Bourgain \cite{bourgain1996growth}, who employed the high-low frequency decomposition method to refine this exponential growth estimate to a polynomial upper bound—specifically for cubic nonlinearities in 2, and 3 dimensions tori. After that, Staffilani
\cite{staffilani1997growth,staffilani1997quadratic} showed that the 
solutions to some type of KdV and
Schr\"odinger equations on
$\R$ and
$\R^2$ possess a polynomial bound 
on the time growth of the
$H^s$-norm,
$s>1$ by using fine multilinear estimates. 
Colliander-Keel-Staffilani-Takaoka-Tao 
established polynomial bounds in low 
Sobolev norm with
$s\in(0,1)$ for the NLS equation by proposing 
a new method using modified energy called 
the ``upside-down I-method" in
\cite{colliander2002polynomial}. Then, Sohinger in
\cite{sohinger2011bounds1,sohinger2011bounds} developed the upside-down 
I-method to obtain polynomial bounds on 
the growth of high Sobolev norms for NLS 
on
$\mathbb{T}$ and
$\R$. 
Additionally, we also refer to \cite{deng2024growth,planchon2017growth,takaoka2024growth} and the references herein for further developments in this topic.
Recently, Deng \cite{deng2019growth} and Deng-Germain \cite{deng2019growth2} were the first to apply long-time Strichartz estimates \eqref{DGG} to establish the growth of Sobolev norms for solutions to energy-critical and subcritical NLS on the three-dimensional irrational torus. Along this route,  we focus on the application of the long-time Strichartz estimate to \eqref{nls1} on 3-dimensional waveguide manifold $\bR\times \bT^2$ and with the nonlinearity corresponding to the case $3<\mu\leq5$, i.e.
\begin{equation}\label{nls}
i\partial_t u+\Delta u = |u|^{\mu-1}u,\,\,x\in\bR\times \bT^2, \,\, 3<\mu\leq5.
\end{equation} 

\subsection{Statement of main results}

Our goal of this paper is to study the long-time Strichartz estimate on waveguide
\begin{equation}\label{eq:long time strichartz estimate}
 \|\ed P_{\le N} \phi\|_{L_{t,x}^p([0,T] \times\bR^m\times \bT^n )}\lea C(p,T,N)\|\phi\|_{L^2(\bR^m\times \bT^n)},
\end{equation}
where $p\ge 2, T\ge 1, N\in 2^\bN, m\ge 1, n\ge 1, d=m+n$.

 Our starting point is the following estimate, derived from the global-in-time Strichartz estimates in \cite{Barron2021}:

\begin{thm}\label{thm:BCP RmTn}
Assume that $p> 2, T\ge 1, N\in 2^\bN, m\ge 1, n\ge 1, d=m+n$. For $\varepsilon>0$, there holds
\begin{equation}\label{eq:long time strichartz estimate RmTn}
 \|\ed P_{\le N} \phi\|_{L_{t,x}^p([0,T] \times\bR^m\times \bT^n )}\lea C_0(p,T,N)\|\phi\|_{L^2(\bR^m\times \bT^n)},
\end{equation}
where
\begin{equation*}
  C_0(p,T,N)= \begin{cases}
             N^\varepsilon T^{\frac{m+2}{2p}-\frac{m}{4}} , & \mbox{if~} 2< p \le 2+\frac{4}{d},  \\
             T^{\frac{m+2}{2p}-\frac{m}{4}} N^{\frac{d}{2}-\frac{d+2}{p}} , & \mbox{if~} 2+\frac{4}{d}<p <2+\frac{4}{m},  \\
              N^{\frac{d}{2}-\frac{d+2}{p}} , & \mbox{if~}  p\ge 2+\frac{4}{m}.
            \end{cases}
\end{equation*}

\end{thm}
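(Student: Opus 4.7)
The strategy is to partition $[0,T]$ into $\lceil T\rceil$ unit subintervals and apply Barron's $\ell^q$-summed Strichartz estimate \eqref{global in time strichartz estimate RmTn} on each, bridging $\ell^q$ and $\ell^p$ in the summation index by H\"older (when $q\ge p$) or by the trivial embedding $\ell^q\hookrightarrow \ell^p$ (when $q\le p$), and finishing with Bernstein $\|P_{\le N}\phi\|_{H^s}\lea N^s\|\phi\|_{L^2}$ for $s=\frac{d}{2}-\frac{d+2}{p}$. For the middle range $p^*<p<2+\frac{4}{m}$, with $p^*:=2+\frac{4}{d}$ and $q=\frac{4p}{m(p-2)}$, one checks $q>p$, so H\"older in $\gamma\in\{0,\dots,\lceil T\rceil-1\}$ with exponent pair $(q/p,\,q/(q-p))$ produces a factor $T^{1/p-1/q}$; a direct computation gives $\frac{1}{p}-\frac{1}{q}=\frac{m+2}{2p}-\frac{m}{4}$, and together with \eqref{global in time strichartz estimate RmTn} and Bernstein this is precisely the claimed bound.

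For the low range $2<p\le p^*$, the same H\"older-in-$\gamma$ argument applied at the endpoint $p=p^*$ (where Barron carries the $N^\varepsilon$ loss and the pair $(p^*,q^*)$ satisfies $q^*/p^*=d/m\ge 1$) yields $N^\varepsilon T^{n/(2(d+2))}\|\phi\|_{L^2}$, matching the target formula at $p=p^*$. For $2<p<p^*$, I would interpolate by H\"older in $(t,x)$ between this endpoint bound and the trivial $L^2_{t,x}$-estimate $\|\ed P_{\le N}\phi\|_{L^2_{t,x}([0,T]\times\bR^m\times\bT^n)}=T^{1/2}\|\phi\|_{L^2}$ coming from the $L^2$-isometry of $\ed$. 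Since the target $T$-exponent $\frac{m+2}{2p}-\frac{m}{4}$ is linear in $1/p$ and agrees at both endpoints $p=2$ (giving $1/2$) and $p=p^*$ (giving $n/(2(d+2))$), this reproduces the claimed $T$-power exactly, and the $N^\varepsilon$ loss simply propagates from $p=p^*$.

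For the high range $p\ge 2+\frac{4}{m}$, one has $q\le p$, so $\ell^q\hookrightarrow\ell^p$ converts the $\ell^q$-sum in \eqref{global in time strichartz estimate RmTn} into an $\ell^p$-sum with no $T$-loss, giving the global-in-time bound $\|\ed P_{\le N}\phi\|_{L^p_{t,x}(\bR\times\bR^m\times\bT^n)}\lea N^{d/2-(d+2)/p}\|\phi\|_{L^2}$. This works directly whenever Barron's hypothesis $q>2$ holds: always when $m\le 2$, and for $p<\frac{2m}{m-2}$ when $m\ge 3$. The residual range $p\ge\frac{2m}{m-2}$ (only for $m\ge 3$) I would handle by H\"older interpolation of $\ed P_{\le N}\phi$ between $L^{p_1}_{t,x}$ at $p_1=\frac{2m}{m-2}$ (already bounded by $N^{s_1}\|\phi\|_{L^2}$ with $s_1=\frac{d}{2}-\frac{d+2}{p_1}$) and $L^\infty_{t,x}$ (controlled by the spatial Bernstein bound $N^{d/2}\|\phi\|_{L^2}$), through $\|f\|_{L^p}\le\|f\|_{L^{p_1}}^{p_1/p}\|f\|_{L^\infty}^{1-p_1/p}$; an exponent check recovers exactly $N^{d/2-(d+2)/p}$. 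The only genuinely subtle point is this last sub-case for $m\ge 3$, where Barron's $q>2$ hypothesis fails and Bernstein must step in; everywhere else the argument is routine H\"older bookkeeping.
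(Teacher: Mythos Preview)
Your proof is correct and follows essentially the same route as the paper: partition $[0,T]$ into unit intervals, use H\"older in the $\gamma$-sum to pass between $\ell^p$ and $\ell^q$, apply Barron's estimate \eqref{global in time strichartz estimate RmTn} together with Bernstein, handle the low range by interpolation with the $L^2$ mass conservation, and the high range by interpolation with the Bernstein $L^\infty_{t,x}$ bound $N^{d/2}\|\phi\|_{L^2}$.

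The only substantive difference is in the high range $p\ge 2+\frac{4}{m}$. The paper simply takes the estimate at the single point $p=2+\frac{4}{m}$ (where $q=p>2$, so Barron applies and the $T$-factor disappears) and interpolates with $p=\infty$ immediately. Your version instead pushes Barron as far as the constraint $q>2$ allows, which forces the extra sub-case $p\ge \frac{2m}{m-2}$ for $m\ge 3$. This introduces a small wrinkle: at your interpolation anchor $p_1=\frac{2m}{m-2}$ one has $q=2$ exactly, so Barron's hypothesis $q>2$ fails and the $L^{p_1}$ bound is not quite justified as stated. It is easily patched (interpolate from any $p_1'\in[2+\frac{4}{m},p_1)$, or simply adopt the paper's anchor $2+\frac{4}{m}$), but the paper's choice avoids the case split entirely.
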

Theorem \ref{thm:BCP RmTn} follows from the global-in-time Strichartz estimate in \cite{Barron2021} together with the H\"older's inequality. Furthermore, lower bounds for the optimal constant are obtained from several explicit examples of initial data.

\begin{thm}\label{thm:example}
Assume that $p\ge 2, T\ge 1, N\in 2^\bN, m\ge 1, n\ge 1, d=m+n$. Then there exist initial data $\phi_1, \phi_2, \phi_3 \in L^2(\bR^m\times \bT^n)$ such that
$$  \|\ed P_{\le N} \phi_1\|_{L^p([0,T] \times\bR^m\times \bT^n )}\gea T^{\frac{m+2}{2p}-\frac{m}{4}} \|\phi_1\|_{L^2(\bR^m\times \bT^n)}, $$
$$  \|\ed P_{\le N} \phi_2\|_{L^p([0,T] \times\bR^m\times \bT^n )}\gea T^{\frac{m+2}{2p}-\frac{m}{4}}N^{\frac{n}{2}-\frac{n+2}{p}} \|\phi_2\|_{L^2(\bR^m\times \bT^n)}, $$
and
$$ \|\ed P_{\le N} \phi_3\|_{L^p([0,T] \times\bR^m\times \bT^n )}\gea N^{\frac{d}{2}-\frac{d+2}{p}} \|\phi_3\|_{L^2(\bR^m\times \bT^n)}.   $$
    
\end{thm}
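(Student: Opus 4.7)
The plan is to construct three explicit families of coherent-state initial data, each saturating one factor of the upper bound in Theorem~\ref{thm:BCP RmTn}.

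For $\phi_3$, which needs the pure Knapp scaling $N^{d/2-(d+2)/p}$, I would take a tensor-product bump
$$
\phi_3(x,y)=N^{(m-n)/2}\psi(Nx)\chi(y),\qquad \chi(y)=\sum_{|k|\le N,\,k\in\bZ^n}e^{ik\cdot y},
$$
with $\psi\in\cS(\bR^m)$ satisfying $\widehat\psi\subset\{|\xi|\le 1\}$; then $\|\phi_3\|_{L^2}\sim 1$ and $P_{\le N}\phi_3=\phi_3$. A direct check shows that every phase $-it|k|^2+ik\cdot y$ is $O(1)$ on the short-time concentration set $\{t\in[0,cN^{-2}],\ |x|\le c'N^{-1},\ |y|\le c''N^{-1}\}$, and together with the scaling identity $e^{it\Delta_x}\psi(N\cdot)(x)=v(N^2t,Nx)$ (with $v=e^{it\Delta}\psi$) this yields $|e^{it\Delta}\phi_3|\sim N^{d/2}$ on that set. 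Since the set has waveguide measure $\sim N^{-(d+2)}$, one obtains $\|e^{it\Delta}\phi_3\|_{L^p}^p\gtrsim N^{dp/2-d-2}$.

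For $\phi_1$, which needs the time-growth factor $T^{(m+2)/(2p)-m/4}$, I would rescale a fixed Schwartz Gaussian $\psi$ to Euclidean scale $\sqrt T$, taking $\phi_1(x,y)=T^{-m/4}\psi(x/\sqrt T)$, constant in $y$. The scaling $e^{it\Delta_x}\phi_1(x)=T^{-m/4}u(t/T,x/\sqrt T)$ with $u=e^{it\Delta}\psi$ and the explicit Gaussian formula $u(s,x)=(1+is)^{-m/2}e^{-|x|^2/2(1+is)}$ give $\|u(s)\|_{L^p(\bR^m)}^p\sim (1+s^2)^{(m-mp/2)/2}$, bounded away from zero on $s\in[0,1]$ for $p>2$. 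Thus $\|e^{it\Delta_x}\phi_1\|_{L^p(\bR^m)}^p\sim T^{m/2-mp/4}$ uniformly for $t\in[0,T]$, and integrating against the fixed torus volume yields $T^{(m+2)/2-mp/4}$. For $\phi_2$ I would take the tensor product, $\phi_2(x,y)=T^{-m/4}\psi(x/\sqrt T)\cdot N^{-n/2}\chi(y)$; separability gives $\|e^{it\Delta}\phi_2\|_{L^p}^p=\int_0^T A(t)B(t)\,dt$ with $A(t)\sim T^{m/2-mp/4}$ as above and $B(t)=N^{-np/2}\|e^{it\Delta_y}\chi\|_{L^p(\bT^n)}^p$.

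The main step, and the most delicate part of the argument, is the long-time lower bound on $\int_0^T B(t)\,dt$. In the standard case $\bT^n=(\bR/2\pi\bZ)^n$ the eigenvalues $|k|^2$ of $-\Delta_{\bT^n}$ are integers and $e^{it\Delta_y}$ is $2\pi$-periodic, so the short-time concentration $|e^{it\Delta_y}\chi|\sim N^n$ on $|y|\lesssim N^{-1}$, $|t|\lesssim N^{-2}$ recurs on each window $[2\pi\ell,2\pi\ell+cN^{-2}]$ for $\ell=0,\ldots,\lfloor T/(2\pi)\rfloor$; summing the $\sim T$ windows yields $\int_0^T B(t)\,dt\gtrsim T\cdot N^{np/2-n-2}$, which combined with $A$ and then $p$-th-rooted produces $T^{(m+2)/(2p)-m/4}N^{n/2-(n+2)/p}$. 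For irrational tori the exact periodicity fails; since only the finitely many phases associated with $|k|\le N$ are involved, Dirichlet's simultaneous Diophantine approximation theorem (or a suitable Diophantine hypothesis on the side lengths) still produces a positive-density set of near-return times in $[0,T]$ on which the same concentration persists, so the lower bound survives. The arguments for $\phi_1$ and $\phi_3$ are insensitive to the torus geometry since they rely only on local short-time behavior.
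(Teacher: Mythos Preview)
Your proposal is correct and follows essentially the same route as the paper: the three examples correspond one-to-one to the paper's constructions (Knapp bump at scale $N$ in all variables for $\phi_3$, data spread at Euclidean scale $\sqrt T$ with zero torus mode for $\phi_1$, and their tensor product for $\phi_2$), and the decisive long-time step for the mixed example exploits the periodicity of $e^{it\Delta_{\bT^n}}$ on the standard torus exactly as the paper does. Your side remark on irrational tori via Dirichlet approximation goes beyond what the paper attempts (it works only on the rational torus), and the ``positive-density set of near-return times'' claim would need more care there since the number of frequencies is $\sim N^n$; this does not, however, affect the main argument.
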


According to the examples in Theorem \ref{thm:example}, we guess that the sharp constant (up to $N^\varepsilon$-loss) in \eqref{eq:long time strichartz estimate}
is
\begin{equation} \label{eq:sharp constant RmTn}
  C(p,T,N)=
             T^{\frac{m+2}{2p}-\frac{m}{4}}+ T^{\frac{m+2}{2p}-\frac{m}{4}}N^{\frac{n}{2}-\frac{n+2}{p}}+  N^{\frac{d}{2}-\frac{d+2}{p}}, 
\end{equation}
for $p\in [2, \infty]$.
A comparison between the conjectured constant above and Theorem \ref{thm:BCP RmTn} shows that, apart from the case where $2+\frac4d < p < 2+\frac4m$, the estimates obtained are already sharp (up to $N^\varepsilon$-loss). 

Our first main result demonstrates that the correctness of the conjectured constant for $p=4$ can be verified. Apart from the cases already covered by Theorem \ref{thm:BCP RmTn}, we primarily consider the cases where $m=1$ and $n \ge 2$.

\begin{thm}\label{thm:long-time Strichartz for p=4}
Assume that $T\ge 1, N \in 2^\bN, n\ge 2$. Then there holds
$$ \|\ed P_{\le N} \phi\|_{L_{t,x}^4([0,T] \times\bR\times \bT^n )}\leaa (T^{\frac18}N^{\frac{n-2}{4}}+  N^{\frac{n-1}{4}} )\|\phi\|_{L^2(\bR\times \bT^n)}.  $$
\end{thm}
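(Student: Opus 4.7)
The plan is to improve the bound of Theorem~\ref{thm:BCP RmTn} (specialized to $m=1$, $p=4$, $n\ge 2$, giving $T^{1/8}N^{(n-1)/4}$) by a factor $N^{1/4}$ in the $T$-dependent term, at the cost of an additive $T$-independent contribution $N^{(n-1)/4}$. I would adapt the Deng--Germain--Guth strategy for tori to the waveguide $\bR\times\bT^n$: represent $\|e^{it\Delta}P_{\le N}\phi\|_{L^4}^4$ as a quadrilinear lattice sum weighted by an oscillatory time factor, then split according to the size of the total resonance.

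\textbf{Step 1 (Quadrilinear expansion).} Expanding $\phi(x,y)=\sum_{|k|\le N}\phi_k(x)e^{ik\cdot y}$ and using Plancherel in $y\in\bT^n$, the Fourier transform in $x\in\bR$, and $\big|\int_0^T e^{-it\Sigma}dt\big|\leq\min(T,2/|\Sigma|)$ yield
\begin{equation*}
\|e^{it\Delta}P_{\le N}\phi\|_{L^4_{t,x,y}([0,T]\times\bR\times\bT^n)}^4\sim\!\!\!\sum_{\substack{k_1+k_2=k_3+k_4\\|k_i|\le N}}\!\int_{\substack{\xi_1+\xi_2=\xi_3+\xi_4\\|\xi_i|\le N}}\!\!\hat\phi_{k_1}(\xi_1)\hat\phi_{k_2}(\xi_2)\overline{\hat\phi_{k_3}(\xi_3)}\,\overline{\hat\phi_{k_4}(\xi_4)}\min(T,|\Sigma|^{-1})\,d\xi,
\end{equation*}
with total resonance $\Sigma=(\xi_1^2+\xi_2^2-\xi_3^2-\xi_4^2)+(|k_1|^2+|k_2|^2-|k_3|^2-|k_4|^2)$.

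\textbf{Step 2 (Resonant and oscillatory regimes).} I dyadically split the sum according to $|\Sigma|\sim M$. In the resonant regime $M\lesssim 1/T$ the weight $\min(T,|\Sigma|^{-1})\sim T$, and a Bourgain-type lattice count on $\bZ^n\cap B_N$ combined with the 1D Schr\"odinger Strichartz pair $\|e^{it\partial_x^2}\phi_k\|_{L^8_tL^4_x(\bR\times\bR)}\lea\|\phi_k\|_{L^2}$ bounds the resonant contribution by $\leaa T^{1/2}N^{n-2}\|\phi\|_{L^2}^4$. In the oscillatory regime $M\gtrsim 1/T$ the weight is $\sim 1/M$, and a Schur-type estimate combining the lattice count on $\bZ^n$ with continuous averaging on $\xi\in[-N,N]$, followed by a dyadic summation in $M$, produces a total bound $\leaa N^{n-1}\|\phi\|_{L^2}^4$ that is independent of $T$. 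Taking fourth roots gives the two target terms $T^{1/8}N^{(n-2)/4}\|\phi\|_{L^2}$ and $N^{(n-1)/4}\|\phi\|_{L^2}$.

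\textbf{Main obstacle.} The central technical step is the mixed discrete-continuous lattice counting: given $M\ge 0$, one needs a sharp upper bound on the number of quadruples $(k_i,\xi_i)\in(\bZ^n\cap B_N)^4\times[-N,N]^4$ satisfying the linear constraints $k_1+k_2=k_3+k_4$, $\xi_1+\xi_2=\xi_3+\xi_4$ and $|\Sigma|\sim M$. This combines a Bourgain--Demeter-style $l^2$-decoupling count on $\bT^n$ with continuous averaging over $\bR$; balancing the two is what produces the sharp $(n-2)/4$ and $(n-1)/4$ exponents, and the proof must carefully track the dyadic sums to avoid picking up any $T^\varepsilon$ loss beyond the $N^\varepsilon$ allowed by $\leaa$.
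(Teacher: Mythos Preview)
Your plan is sound and would ultimately succeed, but the paper reaches the same endpoint by a shorter and more transparent route. Instead of the full quadrilinear expansion with a dyadic split in $|\Sigma|$, the paper invokes the standard bilinear reduction (Proposition~3.7 of \cite{DPST}) to collapse the $L^4$ bound to the single measure estimate
\[
\sup_{|C|\lesssim N^2}\bigl|\{\xi\in\bR\times\bZ^n:\ \bigl||\xi|^2-C\bigr|\le T^{-1},\ |\xi|\lesssim N\}\bigr|\leaa T^{-1/2}N^{n-2}+T^{-1}N^{n-1},
\]
and then handles this by the obvious fibration $\xi=(\xi_1,\tilde\xi)\in\bR\times\bZ^n$: fixing $A=|\tilde\xi|^2\in\bZ$ factors the set as $\bigcup_A X_A\times Y_A$, with $\sup_A|Y_A|\leaa N^{n-2}$ from the classical circle lattice count $r_2(K)=O(K^\varepsilon)$ and $\sum_A|X_A|\lesssim T^{-1/2}+T^{-1}N$ by a two-line one-variable computation. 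This bypasses the $L^8_tL^4_x$ Strichartz input, the Schur test, and the dyadic summation in $M$ that you propose; in particular the ``main obstacle'' you flag dissolves into a few elementary lines, and no danger of a stray $T^\varepsilon$ loss ever arises. Your route would recover the same bound after more bookkeeping, but the paper's packaging makes the separation of the continuous and discrete contributions---which is exactly what produces the two exponents $(n-2)/4$ and $(n-1)/4$---completely explicit from the outset.
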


Here we use the notation $\leaa$ to denote $\lesssim_{\varepsilon} N^{\varepsilon}$. $L^2 \to L^4$ Strichartz estimates for the Schr\"odinger equation have been extensively studied on Euclidean spaces, on tori, and on waveguide manifolds; see, e.g., \cite{Bourgain1993NLS, DPST, DFZ2025, HK2024, KV2016, LZ2025, TT2001}.
We primarily employ classical techniques to prove Theorem \ref{thm:long-time Strichartz for p=4}. It is worth noting that recently developed methods may potentially reduce the $N^\varepsilon$ loss in Theorem \ref{thm:long-time Strichartz for p=4} to a logarithmic loss in $N$ or even to a bounded constant.

Building upon the foundation of Theorem \ref{thm:BCP RmTn}, we consider establishing improved long-time Strichartz estimates on 3D waveguide manifolds. In particular, we only consider the cases $(m,n)\in \{(2,1), (1,2)\}$.

\begin{thm}\label{thm:improved long-time Strichartz estimate}
Assume that $T\ge 1, N\in 2^\bN$. Then on $\bR^2\times \bT $, there holds for $\frac{10}{3}<p<4$,
\begin{equation}\label{eq:improved long time strichartz estimate R2T}
 \|\ed P_{\le N} \phi\|_{L_{t,x}^p([0,T] \times\bR^2\times \bT )}\leaa C_1(p,T,N)\|\phi\|_{L^2(\bR^2\times \bT)},
\end{equation}
where
\begin{equation*}
  C_1(p,T,N)= \begin{cases}
              N^{\frac{3}{2}-\frac{5}{p}} , & \mbox{if~}  T\le N^{\frac{9p}{4}-\frac{15}{2}},
              \\
               T^{\frac{1}{3p}} N^{\frac{3}{4}-\frac{5}{2p}} , & \mbox{if~} N^{\frac{9p}{4}-\frac{15}{2}}<T \le N^{\frac{3}{2}},  \\
               T^{\frac{2}{p}-\frac{1}{2}}N^{\frac{3}{2}-\frac{5}{p}} , & \mbox{if~}   T> N^{\frac{3}{2}}.
            \end{cases}
\end{equation*}
On $\bR\times \bT^2$, there holds for $\frac{10}{3}<p<4$,
\begin{equation}\label{eq:improved long time strichartz estimate RT2 l}
 \|\ed P_{\le N} \phi\|_{L_{t,x}^p([0,T] \times\bR\times \bT^2 )}\leaa C_2(p,T,N)\|\phi\|_{L^2(\bR\times \bT^2)},
\end{equation}
and for $4<p<6$,
\begin{equation}\label{eq:improved long time strichartz estimate RT2 g}
 \|\ed P_{\le N} \phi\|_{L_{t,x}^p([0,T] \times\bR\times \bT^2 )}\leaa C_3(p,T,N)\|\phi\|_{L^2(\bR\times \bT^2)},
\end{equation}
where
\begin{equation*}
   C_2(p,T,N)= \begin{cases}
              N^{\frac{3}{2}-\frac{5}{p}} , & \mbox{if~}  T\le N^{\frac{3p}{4}-\frac{5}{2}},
              \\
               T^{\frac{2}{3p}} N^{1-\frac{10}{3p}} , & \mbox{if~} N^{\frac{3p}{4}-\frac{5}{2}}<T \le N^{\frac{1}{2}},  \\
               T^{\frac{4}{p}-1}N^{\frac{3}{2}-\frac{5}{p}} , & \mbox{if~}  N^{\frac{1}{2}}< T\le N^{2},
                \\
               T^{\frac{3}{2p}-\frac{1}{4}} , & \mbox{if~}   T> N^{2}.
            \end{cases}
\end{equation*}
\begin{equation*}
   C_3(p,T,N)= \begin{cases}
              N^{\frac{3}{2}-\frac{5}{p}} , & \mbox{if~}  T\le N^{p-2},
              \\
               T^{\frac{1}{2p}} N^{1-\frac{4}{p}} , & \mbox{if~} N^{p-2}<T \le N^{4},  \\
               T^{\frac{3}{2p}-\frac{1}{4}}N^{2-\frac{8}{p}} , & \mbox{if~}  T>N^{4}.
            \end{cases}
\end{equation*}
\end{thm}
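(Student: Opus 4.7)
The plan is to establish \eqref{eq:improved long time strichartz estimate R2T}, \eqref{eq:improved long time strichartz estimate RT2 l}, and \eqref{eq:improved long time strichartz estimate RT2 g} by combining three inputs: the baseline bound from Theorem \ref{thm:BCP RmTn}; the sharp $L^4$ estimate (Theorem \ref{thm:long-time Strichartz for p=4} on $\bR \times \bT^2$, together with an analogue on $\bR^2 \times \bT$); and the $L^6$ Strichartz estimate at the exponent $p = 2+4/m$ from Theorem \ref{thm:BCP RmTn} (which is relevant for the range $4<p<6$ on $\bR\times\bT^2$). These are to be combined via real interpolation and a time-partition argument, with different regimes of $T$ corresponding to different interpolation schemes.

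Concretely, I would first fix the torus-direction Fourier decomposition
\begin{equation*}
e^{it\Delta}P_{\le N}\phi(t,x,y) = \sum_{|k|\le N} e^{-it|k|^2}(e^{it\Delta_{\bR^m}}\phi_k)(x)\,e^{ik\cdot y},
\end{equation*}
which reduces the propagator on the waveguide to a superposition of modulated Euclidean Schr\"odinger solutions. The needed companion $L^4$ bound on $\bR^2\times\bT$ follows by combining the global 2D Euclidean Strichartz estimate $\|e^{it\Delta_{\bR^2}}\phi_k\|_{L^4_{t,x}(\bR\times\bR^2)} \lesssim \|\phi_k\|_{L^2}$ with the Sobolev/Bernstein embedding $\|\sum_{|k|\le N} c_k e^{iky}\|_{L^4(\bT)} \lesssim N^{1/4}\|c\|_{\ell^2}$ and Plancherel in $y$, giving $\|e^{it\Delta}P_{\le N}\phi\|_{L^4_{t,x,y}}\lesssim N^{1/4}\|\phi\|_{L^2}$ globally in time.

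Then, for each range of $p$ I would perform Riesz--Thorin interpolation between two adjacent endpoint estimates, using either $L^{10/3}$ and $L^4$ (for $\tfrac{10}{3}<p<4$) or $L^4$ and $L^6$ (for $4<p<6$ on $\bR\times\bT^2$). On $\bR\times\bT^2$, the $L^4$ constant $T^{1/8}+N^{1/4}$ itself has two regimes -- $N^{1/4}$ dominant when $T\le N^2$ and $T^{1/8}$ dominant when $T>N^2$ -- which after interpolation produces the outer regimes of $C_2$ and $C_3$ (the small-$T$ regime with bound $N^{3/2-5/p}$ and the large-$T$ regime with a positive power of $T$). For the inner regimes -- the bounds $T^{1/(3p)}N^{3/4-5/(2p)}$ in $C_1$, $T^{2/(3p)}N^{1-10/(3p)}$ in $C_2$, and $T^{1/(2p)}N^{1-4/p}$ in $C_3$ -- which do not arise from direct interpolation, I would use a subinterval decomposition: partition $[0,T]$ into sub-windows of length $\tau$, apply a refined per-window bound, and sum via the $\ell^p$-triangle inequality; the transition scales of $T$ listed in each $C_i$ arise precisely from the values of $\tau$ where two terms in the resulting bound balance.

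The main obstacle lies in obtaining the inner-regime bounds with the exact exponents stated. Direct Riesz--Thorin interpolation between the stated endpoint bounds recovers only the outer regimes, while the inner regimes require a more refined argument, most plausibly a bilinear or TT${}^*$-type estimate that exploits simultaneously the $t^{-m/2}$ dispersive decay of the $\bR^m$ kernel and the Weyl-sum cancellation of $\sum_{|k|\le N}e^{-it|k|^2+ik\cdot y}$ on the $\bT^n$ factor, in the spirit of the DGG $\ell^2$-decoupling approach on irrational tori. Verifying the continuity of the piecewise constants at each transition value of $T$ (the powers $N^{9p/4-15/2}$, $N^{3/2}$, $N^{3p/4-5/2}$, $N^{1/2}$, $N^2$, $N^{p-2}$, $N^4$) and tracking the $N^\varepsilon$-losses through the summation steps is routine but tedious case analysis.
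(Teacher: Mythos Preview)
Your interpolation step is fine as far as it goes: for $C_3$ it indeed recovers the first and third regimes, and for $C_1,C_2$ it recovers the large-$T$ regimes. But the small-$T$ regime of $C_1$ (and the first two regimes of $C_2$) does \emph{not} come from interpolation, and your proposed fix --- partitioning $[0,T]$ into windows of length $\tau$ and summing in $\ell^p$ --- cannot produce it either. Windowing gives at best $(T/\tau)^{1/p}$ times a per-window bound, and optimizing over $\tau$ just returns the baseline $T^{2/p-1/2}N^{3/2-5/p}$ from Theorem~\ref{thm:BCP RmTn}; the whole point of regime~1 of $C_1$ is that the bound $N^{3/2-5/p}$ persists \emph{without} any $T$-growth up to $T\sim N^{9p/4-15/2}$, and no amount of slicing the interval recovers that. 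Your final paragraph correctly senses that a $TT^*$/kernel argument is needed, but you then fold it back into the window scheme, which is the wrong container.

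The paper's proof is organized entirely differently, via the level-set (layer-cake) method of Bourgain and Deng--Germain--Guth. One normalizes $\|\phi\|_{L^2}=1$, sets $E_\lambda=\{|e^{it\Delta}P_{\le N}\phi|>\lambda\}$, and writes $\|F\|_{L^p}^p=p\int_0^\infty \lambda^{p-1}|E_\lambda|\,\mathrm{d}\lambda$. Chebyshev applied to Theorems~\ref{thm:BCP RmTn} and~\ref{thm:long-time Strichartz for p=4} gives one family of bounds on $|E_\lambda|$. The decisive new input is a \emph{$T$-independent} bound $|E_\lambda|\lesssim \lambda^{-10/3}$ valid for $\lambda$ above a threshold ($\lambda\gtrsim N^{3/4}$ on $\bR^2\times\bT$, $\lambda\gtrsim N$ on $\bR\times\bT^2$). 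This comes from splitting the propagator kernel $\psi(t/T)K_N(t,x)$ into a short-time piece $J_1$ (with $\|\widehat{J_1}\|_{L^\infty}\lesssim A$) and long-time pieces $J_2,J_3,J_4$ (with $\|J_j\|_{L^\infty}$ controlled by the Euclidean dispersive bound $|t|^{-m/2}$ and the Weyl bound for $\sum_{|k|\le N}e^{2\pi i(yk-tk^2)}$), yielding $\lambda^2|E_\lambda|^2\lesssim A|E_\lambda|+(A^{-3/2}+\cdots)|E_\lambda|^2$; optimizing the cutoff $A$ in terms of $\lambda$ gives the claim. Integrating the resulting piecewise bound for $|E_\lambda|$ against $\lambda^{p-1}$ and splitting the integral at the crossover values produces exactly the stated regimes of $C_1,C_2,C_3$. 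The intermediate regimes you flagged are thus not an interpolation artifact at all --- they arise from the integration in $\lambda$ where the $T$-independent bound $\lambda^{-10/3}$ takes over from the Chebyshev bounds.
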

From Theorems \ref{thm:BCP RmTn}, \ref{thm:long-time Strichartz for p=4} and \ref{thm:improved long-time Strichartz estimate}, we immediately derive the following corollary. 
\begin{cor}\label{cor-long-time}  It holds that
\begin{equation}
\left\|\ed P_{\le N} \phi\right\|_{L_{t, x}^p\left(\left[0, N^{c(p)}\right] \times \bR\times \bT^2\right)} \leaa N^{\frac{3}{2}-\frac{5}{q}}\|\phi\|_{L^2(\bR\times \bT^2)},
\end{equation}where 
\begin{equation*}
    c(p)= \begin{cases}\frac{3 p-10}{4}, & \frac{10}{3}<p\leq4,\\p-2, & 4<p< 6, \\ \infty, & p \geq 6,\end{cases}
\end{equation*}
and
\begin{equation}
\left\|\ed P_{\le N} \phi\right\|_{L_{t, x}^p\left(\left[0, N^{\tilde{c}(p)}\right] \times \bR^2\times \bT\right)} \leaa N^{\frac{3}{2}-\frac{5}{q}}\|\phi\|_{L^2(\bR^2\times \bT)},
\end{equation}where 
\begin{equation*}
   \tilde{c}(p)= \begin{cases}\frac{3(3 p-10)}{4}, & \frac{10}{3}<p<4, \\ \infty, & p \geq 4.\end{cases}
\end{equation*}
\end{cor}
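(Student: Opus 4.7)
The plan is purely to consolidate Theorems \ref{thm:BCP RmTn}, \ref{thm:long-time Strichartz for p=4}, and \ref{thm:improved long-time Strichartz estimate} into a single uniform statement. For each range of $p$ I would identify the largest $T$ for which the relevant piecewise bound still evaluates to $N^{\frac{3}{2}-\frac{5}{p}}=N^{\frac{d}{2}-\frac{d+2}{p}}$ (the natural scaling-critical exponent, with $d=3$), and then read off $c(p)$ (respectively $\tilde c(p)$) as the exponent of that threshold in $N$.

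For $\bR\times\bT^2$ (so $m=1$, $n=2$, $d=3$) I would split $p$ into four sub-ranges. When $p\ge 6=2+\frac{4}{m}$, Theorem \ref{thm:BCP RmTn} directly yields $C_0(p,T,N)=N^{\frac{3}{2}-\frac{5}{p}}$ with no constraint on $T\ge 1$, hence $c(p)=\infty$. For $4<p<6$, the first branch of $C_3$ in \eqref{eq:improved long time strichartz estimate RT2 g} equals $N^{\frac{3}{2}-\frac{5}{p}}$ precisely as long as $T\le N^{p-2}$, giving $c(p)=p-2$. For $\frac{10}{3}<p<4$, the first branch of $C_2$ in \eqref{eq:improved long time strichartz estimate RT2 l} equals $N^{\frac{3}{2}-\frac{5}{p}}$ for $T\le N^{\frac{3p}{4}-\frac{5}{2}}=N^{\frac{3p-10}{4}}$, matching the claim. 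The borderline $p=4$ (excluded from Theorem \ref{thm:improved long-time Strichartz estimate}) would be handled separately by Theorem \ref{thm:long-time Strichartz for p=4} with $n=2$: at $T=N^{1/2}$, the bound $T^{1/8}+N^{1/4}$ collapses to $\leaa N^{1/4}=N^{\frac{3}{2}-\frac{5}{4}}$, consistent with $c(4)=\frac{1}{2}$ read from the formula $\frac{3p-10}{4}|_{p=4}$.

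For $\bR^2\times\bT$ (so $m=2$, $n=1$, $d=3$) the argument is shorter because $2+\frac{4}{m}=4$: Theorem \ref{thm:BCP RmTn} gives $C_0(p,T,N)=N^{\frac{3}{2}-\frac{5}{p}}$ for every $T\ge 1$ once $p\ge 4$, so $\tilde c(p)=\infty$. For $\frac{10}{3}<p<4$, I would invoke \eqref{eq:improved long time strichartz estimate R2T}: the first branch of $C_1$ is exactly $N^{\frac{3}{2}-\frac{5}{p}}$ and persists up to $T\le N^{\frac{9p}{4}-\frac{15}{2}}=N^{\frac{3(3p-10)}{4}}$, yielding $\tilde c(p)=\frac{3(3p-10)}{4}$.

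No genuinely new estimate needs to be proved, and I do not anticipate any real obstacle. The only bookkeeping is to check that the threshold exponents are non-negative in each sub-range (automatic from $p>\frac{10}{3}$, since $\frac{3p-10}{4}>0$ there) and that the $N^\varepsilon$ loss packaged in $\leaa$ propagates through unchanged under the monotonicity $T\mapsto \|\ed P_{\le N}\phi\|_{L^p_{t,x}([0,T]\times\cdot)}$. In summary, the corollary is obtained by optimizing the free parameter $T$ against $N$ in each of the piecewise Strichartz bounds from the preceding theorems.
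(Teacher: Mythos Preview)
Your proposal is correct and matches the paper's own approach exactly: the paper simply states that the corollary follows ``immediately'' from Theorems \ref{thm:BCP RmTn}, \ref{thm:long-time Strichartz for p=4}, and \ref{thm:improved long-time Strichartz estimate}, and your case-by-case reading of the first branch of each piecewise constant $C_0,C_1,C_2,C_3$ (plus the $p=4$ endpoint via Theorem \ref{thm:long-time Strichartz for p=4}) is precisely the intended derivation. As a minor aside, note that at $p=4$ on $\bR\times\bT^2$ the bound $T^{1/8}+N^{1/4}\leaa N^{1/4}$ actually persists up to $T\le N^{2}$, so the stated value $c(4)=\tfrac12$ is valid but not the sharpest one could extract; this does not affect the corollary as written.
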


Finally, we discuss the application of Theorem \ref{thm:improved long-time Strichartz estimate}.  We focus on $\mathbb{R} \times \mathbb{T}^2$, as the $\mathbb{R}^2 \times \mathbb{T}$ case already has stronger scattering results \cite{HP,tzvetkov2016well}. Our result about the growth of higher Sobolev norms is stated as follows.

\begin{thm}[Growth of higher Sobolev norms to NLS on 3D waveguide]\label{thm:growth of solutions to NLS on 3D waveguide} Suppose $u$ is a solution to \eqref{nls} with finite energy and $\|u(0)\|_{H^s\left(\bR\times \bT^2\right)}=A$.
 Define the exponent $\omega(s,\mu)$ for $3<\mu\leq5$ by
 \begin{equation*}
		\omega(s,\mu) :=
		\begin{cases}
			\frac{s}{5-\mu+\theta(\mu)}, \ 3<\mu<5,\\
			300(s-1), \ \mu=5,
		\end{cases}
	\end{equation*}
where $\theta(\mu)=\frac{|(\mu-3)(5-\mu)|}{2(65\mu-162)}\cdot\frac{6-\mu}{16-\mu}>0$  for $3<\mu<5$.
Then for $s>1$, the estimate
 $$
\|u(t)\|_{H^s\left(\bR\times \bT^2\right)} \lesssim A+(1+|t|)^{\omega(s,\mu)}
$$ holds for all  $3<\mu<5$, and for $\mu=5$ under the  additional small-energy assumption that $E[u] \leq \eta^2$ for some constant $\eta>0$.
\end{thm}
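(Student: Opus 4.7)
The plan is to adapt the upside-down I-method of Colliander--Keel--Staffilani--Takaoka--Tao, as refined by Deng \cite{deng2019growth} and Deng--Germain \cite{deng2019growth2} for the subcritical and energy-critical NLS on $\mathbb{T}^3$, to the present waveguide setting $\mathbb{R}\times\mathbb{T}^2$. The key new ingredient that makes this adaptation feasible is Corollary \ref{cor-long-time}, which furnishes the long-time Strichartz estimates on $\mathbb{R}\times\mathbb{T}^2$ needed to control the multilinear interactions in the almost-conservation step; this corollary plays here the role that \eqref{DGG} plays in \cite{deng2019growth,deng2019growth2}.

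First I introduce the smoothing Fourier multiplier $I = I_N$ with symbol $1$ for $|\xi|\le N$ and $(N/|\xi|)^{s-1}$ for $|\xi|\ge 2N$, so that $\|u\|_{H^s}\lesssim N^{s-1}\|Iu\|_{H^1}$ and $\|Iu_0\|_{H^1}\lesssim N^{1-s}A$, and I define the modified energy
$$ E[Iu] = \tfrac12\|\nabla Iu\|_{L^2}^2 + \tfrac{1}{\mu+1}\|Iu\|_{L^{\mu+1}}^{\mu+1}. $$
Since $Iu$ satisfies $i\partial_t Iu + \Delta Iu = |Iu|^{\mu-1}Iu + R$ with commutator $R = I(|u|^{\mu-1}u) - |Iu|^{\mu-1}Iu$, a standard computation gives the almost-conservation identity
$$ E[Iu](t_0+\tau) - E[Iu](t_0) = \Im \int_{t_0}^{t_0+\tau}\!\int_{\mathbb{R}\times\mathbb{T}^2} \overline{R}\,\bigl(\Delta Iu - |Iu|^{\mu-1}Iu\bigr)\,dx\,dt. $$
Next I would bound this increment by expanding each factor dyadically in frequency, exploiting the commutator gain $(N/N_{\max})^{s-1}$ on blocks with $N_{\max}\ge N$, and distributing the Strichartz norms across the $\mu+1$ factors via H\"older together with Corollary \ref{cor-long-time} applied on a time slab of length $\tau = N^{c(p)}$ with $p\in(\tfrac{10}{3},6)$ chosen to optimize the gain. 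This yields a bound of the schematic form
$$ |E[Iu](t_0+\tau) - E[Iu](t_0)| \leaa N^{-(5-\mu+\theta(\mu))}\,\bigl(1+E[Iu](t_0)\bigr)^{\gamma(\mu)}, $$
for $3<\mu<5$, where $\theta(\mu)>0$ is precisely the small improvement obtained by pushing $p$ slightly above $\tfrac{10}{3}$ (the scaling-critical exponent for the Euclidean factor). In the energy-critical case $\mu=5$, the same scheme produces the gain $N^{-1/300}$ under the small-energy hypothesis $E[u]\le\eta^2$, which is what allows the relevant Strichartz norms of $Iu$ on each slab to be controlled via a bootstrap.

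Iterating this almost-conservation law across $\sim T/\tau$ consecutive slabs, using $E[Iu](0)\lesssim N^{2(1-s)}A^2$, and then optimizing $N=N(T)$ against $T$, yields $\|u(T)\|_{H^s}\lesssim A + (1+|t|)^{\omega(s,\mu)}$ with $\omega(s,\mu)$ as stated. The principal technical obstacle is the multilinear commutator estimate in the second step: unlike in the $\mathbb{T}^3$ case, the waveguide estimate from Corollary \ref{cor-long-time} has a genuinely $T$-dependent constant whose regime changes across the thresholds $T\le N^{p-2}$, $N^{p-2}<T\le N^4$, and $T>N^4$, so one must perform the case analysis over both frequency configurations and time windows, then optimize $p$ delicately to arrive at the precise form $\theta(\mu)=\tfrac{|(\mu-3)(5-\mu)|}{2(65\mu-162)}\cdot\tfrac{6-\mu}{16-\mu}$. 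The critical case $\mu=5$ additionally requires a bootstrap on the Strichartz norm of $Iu$ to close the iteration under the small-energy condition, and it is this bootstrap, combined with the $N^\varepsilon$ losses inherent to Corollary \ref{cor-long-time}, that ultimately forces the specific numerical constant $300$.
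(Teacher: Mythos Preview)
Your overall framework---upside-down $I$-method following \cite{deng2019growth,deng2019growth2}, with modified energy $E[\mathcal{D}u]$ and iteration over time slabs---is the same as the paper's, but two substantive ingredients are missing. First, you propose to bound the energy increment by applying Corollary \ref{cor-long-time} directly to the factors of the nonlinearity via H\"older. Corollary \ref{cor-long-time} is a \emph{linear} estimate; the paper instead proves, as an intermediate step, long-time Strichartz bounds for the \emph{nonlinear} solution $P_K\mathcal{D}u$ (Proposition \ref{nonlctrl} for $\mu=5$ in the $S^{7/2}_{K,J}$ norm, Proposition \ref{conversation2} for $3<\mu<5$ in $L^{q_0}_{t,x}$), and these feed into the almost-conservation laws (Propositions \ref{alm-con-ene}, \ref{almostconse2}). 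Obtaining these nonlinear bounds requires a Duhamel decomposition $\mathcal{N}_1+\cdots+\mathcal{N}_4$, resonance analysis in $X^{s,b}$, and the strip Strichartz estimate \eqref{str2}; it is not a direct transfer of the linear estimate. Second, you identify the regime changes in the $T$-dependent constant of Corollary \ref{cor-long-time} as the principal obstacle. That is not the key point. The genuinely new waveguide-specific observation, stressed in the proofs of Propositions \ref{nonlctrl} and \ref{conversation2}, is that the frequency space $\mathbb{R}\times\mathbb{Z}^2$ has a continuous direction, so the set $\{|\xi|\le K^{-20\alpha_1}\}$ has measure $\lesssim K^{-20\alpha_1}$, whereas on $\mathbb{T}^3$ the zero mode alone has measure $1$. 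This low-frequency measure gain is what allows one to bypass the zero-mode extraction / image-rotation step of \cite{deng2019growth}; without it the resonance analysis for $\mathcal{N}_3,\mathcal{N}_4$ does not close.

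Two smaller points: the exponent $\theta(\mu)$ does not come from optimizing the Strichartz index $p$ as you suggest, but from optimizing the parameter $\gamma_0$ in the resonance-set decomposition inside Proposition \ref{conversation2} (giving $\theta_1=\frac{(\mu-3)(\mu-5)}{2(65\mu-162)}$), combined with the almost-conservation iteration. And your multiplier has the exponent inverted: the operator $\mathcal{D}_N$ in the paper has symbol $(|\xi|/N)^{s-1}$ for $|\xi|\ge 2N$, amplifying high frequencies, which is what makes $\|u\|_{H^s}\lesssim N^{s-1}\|\mathcal{D}u\|_{H^1}$ hold for $s>1$.
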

\begin{rem}

For the case $\mu=5$, scattering for $H^1$ initial data has already been established in \cite{HP}, which implies that high-order Sobolev norms are globally bounded rather than growing polynomially. Nevertheless, to illustrate the applicability of Theorem \ref{thm:improved long-time Strichartz estimate}, we derive the polynomial growth bound for small energy solutions.
\end{rem}

\subsection{Organization of the rest of this paper}
In Section $2$, we primarily present the long-time Strichartz estimates (Theorem \ref{thm:BCP RmTn}) derived from the main results in \cite{Barron2021}, along with some related counterexamples (Theorem \ref{thm:example}). In Section $3$, we present the main analytical results, namely the sharp long-time Strichartz estimates for $p=4$ (Theorem \ref{thm:long-time Strichartz for p=4}) and the improved long-time Strichartz estimates (Theorem \ref{thm:improved long-time Strichartz estimate}). In Section $4$, we discuss relevant applications of Theorem \ref{thm:improved long-time Strichartz estimate} to the NLS and obtain a  polynomial upper bound on the growth of the $H^s$-norm of the solution to \eqref{nls} on $\bR \times \bT^2$.

\subsection{Notation} In this subsection, we summarize some notations that would be used throughout this paper. For nonnegative quantities $A$ and $B$, we write $A\lesssim B$ if there exists a constant $C>0$ such that $A\leq C B$. And we use the notation $\leaa$ to denote $\lesssim_{\varepsilon} N^{\varepsilon}$. The notation $O(1)$ denotes a bounded constant whose value may change from line to line. We further write $o(1)$ for a quantity tending to zero, and use the shorthand $a+$ (respectively, $a-$) to denote any quantity greater (respectively, smaller) than $a$. 

We use $\chi$ to denote a Schwartz function such that $\ca_{[-1,1]}\le \chi \le \ca_{[-2,2]}$. 

For simplicity, we denote
$$ \int_{\bR^m\times \bZ^n} F(\xi) {\rm d} \xi := \int_{\bR^m}\sum_{\xi_2\in \bZ^n} F(\xi_1, \xi_2) {\rm d} \xi_1$$
and
$$ \|F\|_{L^2(\bR^m\times \bZ^n)}^2:=\int_{\bR^m}\sum_{\xi_2\in \bZ^n} |F(\xi_1, \xi_2)|^2 {\rm d} \xi_1.  $$

For each dyadic number $N\geq 1$, we define the Fourier multiplier operator $\mathcal{D}_N$ by
\begin{equation*}
\widehat{\mathcal{D}_N u}(\xi)=g(\xi/N)\widehat{u}(\xi),
\end{equation*}
where $g(y)=g(|y|)$ is a smooth radial cutoff satisfying
\[
g(y)=1 \quad \text{for } |y|\leq 1,
\qquad 
g(y)=|y|^{\,s-1} \quad \text{for } |y|\geq 2.
\]
Moreover, once $N$ is fixed, we suppress the subscript and write $\mathcal{D}_N=\mathcal{D}$.

\subsection*{Acknowledgments}
 Y. Deng was supported by China Postdoctoral Science Foundation (Grant No. 2025M774191) and the NSF grant of China (No. 12501117). B. Di was supported by the China Postdoctoral Science Foundation (Grant Nos. GZB20230812 \& 2024M753436 \& YJC20250892) and the National Natural Science Foundation of China (Grant Nos. 12471232 \& 12501130). D. Yan was supported in part by the National Key R\&D Program of China [Grant No. 2023YFC3007303], National Natural Science Foundation of China [Grant Nos. 12071052 \& 12271501]. K. Yang was supported by the NSF grant of China (No.12301296, 12371244), Natural Science Foundation of Chongqing, China (CSTB2024NSCQ-MSX0867, CSTB2024NSCQ-LZX0139), and the Science and Technology Research Program of Chongqing Municipal Education Commission (Grant No.KJZD-K202400505).

\section{Preliminary long-time Strichartz estimates on waveguides and examples}

In this section, we prove Theorem \ref{thm:BCP RmTn} and Theorem \ref{thm:example}.

\subsection{Preliminary long-time Strichartz estimate}
Deriving from the global-in-time Strichartz estimate in \cite{Barron2021}, we can get the long time version Strichartz estimate.

\begin{proof}[\textbf{Proof of Theorem \ref{thm:BCP RmTn}}]
As we know that the $L^2-$mass is conserved, we have 
\begin{equation*}
 \|\ed P_{\le N} \phi\|_{L_{t,x}^2([0,T] \times\bR^m\times \bT^n )}\lea T^{\frac{1}{2}}\|\phi\|_{L^2(\bR^m\times \bT^n)}.
\end{equation*}
Note that the frequency of the initial data $\phi$ is $\le N$, we  have
\begin{equation*}
 \|\ed P_{\le N} \phi\|_{L_{t,x}^{\infty}([0,T] \times\bR^m\times \bT^n )}\lea \|\widehat{P_{\le N}\phi}\|_{L^1(\bR^m\times \bZ^n)}  \lea N^{\frac{d}{2}}\|\phi\|_{L^2(\bR^m\times \bT^n)},
\end{equation*}

By the H\"{o}lder's inequality, 
\begin{align*}
 \|\ed P_{\le N} \phi\|_{L_{t,x}^p([0,T] \times\bR^m\times \bT^n )}
& = \l(\sum_{\gamma =0}^{T-1}\|\ed P_{\le N} \phi\|_{L_{t,x}^p([\gamma,\gamma+1] \times\bR^m\times \bT^n )}^p \r)^{\frac{1}{p}}  \nonumber 
\\
&
\leq T^{\frac{1}{p}-\frac{1}{q}}  \l(\sum_{\gamma =0}^{T-1}\|\ed P_{\le N} \phi\|_{L_{t,x}^p([\gamma,\gamma+1] \times\bR^m\times \bT^n )}^q \r)^{\frac{1}{q}}.
\end{align*}
Applying  \eqref{global in time strichartz estimate RmTn} to the endpoint case $p= 2+\frac{4}{d}$, $q=q(p)=\frac{4p}{m(p-2)}, s=\varepsilon>0$, we obtain
\begin{equation*}
\|\ed P_{\le N} \phi\|_{L_{t,x}^p([0,T] \times\bR^m\times \bT^n )}\lea  N^\varepsilon T^{\frac{m+2}{2p}-\frac{m}{4}} \|\phi\|_{L^2(\bR^m\times \bT^n)},
\end{equation*}
interpolating with case $p=2$, we get \eqref{eq:long time strichartz estimate RmTn} for $2< p \le 2+\frac{4}{d}.$

Applying  \eqref{global in time strichartz estimate RmTn} to $2+\frac{4}{d}<p\le 2+\frac{4}{m}$, $q=q(p)=\frac{4p}{m(p-2)}$, we obtain
\begin{align}
 \|\ed P_{\le N} \phi\|_{L_{t,x}^p([0,T] \times\bR^m\times \bT^n )}
\lea T^{\frac{m+2}{2p}-\frac{m}{4}} N^{\frac{d}{2}-\frac{d+2}{p}} \|\phi\|_{L^2(\bR^m\times \bT^n)}.\nonumber
\end{align}
Thus \eqref{eq:long time strichartz estimate RmTn} holds for $2+\frac{4}{d}<p<2+\frac{4}{m}.$
In particular, for $p=2+\frac{4}{m}$, 
\begin{equation*}
 \|\ed P_{\le N} \phi\|_{L_{t,x}^p([0,T] \times\bR^m\times \bT^n )}\lea  N^{\frac{d}{2}-\frac{d+2}{p}} \|\phi\|_{L^2(\bR^m\times \bT^n)},
 \end{equation*}
interpolating with case $p=\infty$, we get  
\eqref{eq:long time strichartz estimate RmTn} for $p\ge 2+\frac{4}{m}.$

\end{proof}

\subsection{Examples}
In this subsection, we provide some examples to suggest that, up to the $N^{\varepsilon}$-loss, the sharp constant in the long-time Strichartz inequality \eqref{eq:long time strichartz estimate} might be $C(p,T,N)$ stated in \eqref{eq:sharp constant RmTn}.
\begin{proof}[\textbf{Proof of Theorem \ref{thm:example}}]
    The proof mainly relies on the stationary phase analysis. For convenience, we introduce the notations
    \[LHS:= \|e^{it\Delta_{\bR^m\times \bT^n}} \phi\|_{L_{t,x}^p([0,T]\times \bR^m \times \bT^n)}, \quad RHS:= \|\phi\|_{L_{x}^2(\bR^m\times \bT^n)}.\]
    First, we choose the Schwartz function $\phi_1(x) \in L^2(\bR^m\times \bT^n)$ which satisfies
    \[\widehat{\phi}_1 (\xi)= \chi_{[-N,N]^{m}} (\xi_1) \chi_{[-N,N]^n}(\xi_2), \quad \xi=(\xi_1, \xi_2) \in \bR^m \times \bZ^n,\]
    then, by the stationary phase method, there exists a small constant $c>0$ such that
    \[\l|e^{it\Delta_{\bR^m \times \bT^n}} \phi_1(x)\r| \gtrsim N^{m+n}, \quad \forall (t,x) \in [0,cN^{-2}] \times \l([-cN^{-1}, cN^{-1}]^m \times [-cN^{-1}, -cN^{-1}]^n\r).\]
    Therefore, we have
    \[LHS \gtrsim N^{d-\frac{d+2}{p}}, \quad RHS = N^{\frac{d}{2}},\]
    which gives the desired constant. Second, we choose the Schwartz function $\phi_2(x) \in L^2(\bR^m\times \bT^n)$ which satisfies
    \[\widehat{\phi}_2 (\xi)= \chi_{[0,T^{-1/2}]^{m}} (\xi_1) \ca_{=0}(\xi_2), \quad \xi=(\xi_1,\xi_2) \in \bR^m \times \bZ^n,\]
    then there exists a small constant $c>0$ such that
    \[\l|e^{it\Delta_{\bR^m \times \bT^n}} \phi_2(x) \r|\gtrsim T^{-m/2}, \quad \forall (t,x) \in [0,cT] \times \l([-c\sqrt{T}, c\sqrt{T}]^m \times [0,c]^n \r).\]
    Therefore, we have
    \[LHS \gtrsim T^{\frac{2+m}{2p} -\frac{m}{2}}, \quad RHS = T^{-\frac{m}{4}},\]
    which gives the desired constant. Third, we choose the the function $\phi_3(x,y) \in L^2(x,y)$ which satisfies
    \[\widehat{\phi}_3 (\xi)= \widehat{\phi}_{3,1}(\xi_1) \widehat{\phi}_{3,2}(\xi_2) := \chi_{[0,T^{-1/2}]^m}(\xi_1) \chi_{[0,N]^n}(\xi_2), \quad (\xi_1,\xi_2) \in \bR^m \times \bZ^n,\]
    then there exists a small constant $c>0$ such that
    \[\l|e^{it\Delta_{\bR^m}} \phi_{3,1}(x_1)\r| \gtrsim T^{-m/2}, \quad \forall (t,x_1) \in [0,cT]\times [-c\sqrt{T},c\sqrt{T}]^m,\]
    and
    \[\l| e^{it\Delta_{\bT^n}} \phi_{3,2}(x_2)\r| \gtrsim N^n, \quad \forall (t,x_2) \in [0,c N^{-2}]\times [-cN^{-1},cN^{-1}]^n.\]
    Thus we conclude
    \begin{align*}
        LHS &\geq \l\| \|e^{it\Delta_{\bR^m}} \phi_{3,1}\|_{L^p([0,T^{1/2}]^m)} \|e^{it\Delta_{\bT^n}} \phi_{3,2}\|_{L^p([0,N^{-1}]^n)} \r\|_{L_t^p([0,T])} \\
        &\gtrsim T^{\frac{m}{2p} -\frac{m}{2}} \l\| e^{it\Delta_{\bT^n}}\phi_{3,2} \r\|_{L^p([0,T] \times [0,N^{-1}]^n)} \\
        & \gtrsim T^{\frac{m}{2p} -\frac{m}{2}} T^{1/p} \l\| e^{it\Delta_{\bT^n}} \phi_{3,2} \r\|_{L^p([0,1]\times [0,N^{-1}]^n)},
    \end{align*}
    where we have used the periodic property of $e^{it\Delta_{\bT^n}}$. Next, similar to the first case, we can obtain the following estimate
    \[\l\| e^{it\Delta_{\bT^n}} \phi_{3,2} \r\|_{L^p([0,1]\times [0,N^{-1}]^n)} \gtrsim N^{n-\frac{n+2}{p}}.\]
    Therefore, we have
    \[LHS \gtrsim T^{\frac{m+2}{2p} -\frac{m}{2}} N^{n-\frac{n+2}{p}}, \quad RHS= T^{-m/4} N^{n/2},\]
    which gives the desired constant. Hence we complete the proof.
\end{proof}

\section{The case $p=4$ and improved long-time Strichartz estimate}
In this section, we prove Theorem \ref{thm:long-time Strichartz for p=4} and Theorem \ref{thm:improved long-time Strichartz estimate}.

\subsection{$L^2\to L^4$ long-time Strichartz estimate on $\bR\times \bT^n$}

We mainly follow the technical framework of \cite{DPST,TT2001} to reduce the proof of the $L^2 \to L^4$-type Strichartz estimates to establishing measure estimates, which are readily manageable in our proof.

\begin{proof}[\textbf{Proof of Theorem \ref{thm:long-time Strichartz for p=4}}]
 Using the standard argument in the proof of Proposition 3.7 in \cite{DPST}, it suffices to prove the following measure estimate: for fixed $a, b \in \bR\times \bZ^n$ with $|a|, |b|\lea N$, 
 $$ \l|\l\{\xi\in\bR\times \bZ^n: |(\xi-a)\cdot(\xi-b)|\le T^{-1}, |\xi|\lea N          \r\} \r|\leaa T^{-\frac12}N^{n-2}+T^{-1} N^{n-1},  $$
 where $|E|$ denotes the standard measure of $E$ on $\R\times\bZ^n$, which is the product of  Lebesgue measure on $\R$ and the counting measure on $\bZ^n$. 
By translation, it suffices to prove the following estimate:
$$ \sup_{C\in \bR, |C|\lea N^2} \l|\l\{\xi\in\bR\times \bZ^n: ||\xi|^2-C|\le T^{-1}, |\xi|\lea N          \r\} \r|\leaa T^{-\frac12}N^{n-2}+T^{-1} N^{n-1}.    $$
We fix $C\in \bR, |C|\lea N^2$. Then
\begin{align*}
\l|\l\{\xi\in\bR\times \bZ^n: ||\xi|^2-C|\le T^{-1}, |\xi|\lea N          \r\} \r| 
\le \sum_{A\in \bZ, 0\le A\lea N^2} |X_A| |Y_A|,
\end{align*}
where
$$X_A=\l\{ \xi_1\in\bR: ||\xi_1|^2-A-C|\le  T^{-1}, |\xi_1|\lea N  \r\},$$
and
$$Y_A=\l\{ \w{\xi}=(\xi_2, \cdots, \xi_{n+1})\in\bZ^n: |\w{\xi}|^2=A, |\w{\xi}|\lea N  \r\}.$$
Note that the number of lattice points on the circle $x^2+y^2=K$ is $O(K^\varepsilon)$, we deduce that
$$ \sup_{A\in \bZ, 0\le A\lea N^2} |Y_A|\leaa N^{n-2}.  $$
So we only need to prove that
\begin{equation}\label{eq:measure estimate}
  \sum_{A\in \bZ, 0\le A\lea N^2} |X_A| \lea T^{-\frac12}+T^{-1} N.  
\end{equation}
Observe that for any $A$ satisfying $|A+C| \lea T^{-1}$ (and there are at most $O(1)$ such $A$), we have $$|X_A| \lea T^{-\frac12}.$$
For $A$ with $A+C \gg T^{-1}$, it then follows that $$|X_A|\le\sqrt{A+C+T^{-1}}-\sqrt{A+C-T^{-1}} \lea T^{-1}(A+C)^{-\frac{1}{2}}.$$
The proof of \eqref{eq:measure estimate} is completed by summing $|X_A|$ over all indices $A$ for which $|A+C|\lea N^2$.

\end{proof}

\begin{rem}
In summary, combining Theorem 1.3 with Theorem 1.1, we confirm that the optimal constant $C(p,T,N)$ stated in \eqref{eq:sharp constant RmTn} is correct when $2\le p\le 2+\frac{4}{d}$, $p=4$ or $p\ge 2+\frac{4}{m}$.
\end{rem}

\begin{rem}
Results concerning the sharp $L^2\to L^4$ Strichartz estimate for the Schr\"odinger equation on the tori or waveguide manifolds, can be found in \cite{DFZ2025,HK2024,KV2016,LZ2025,TT2001}. Studying the optimal constant in Theorem \ref{thm:long-time Strichartz for p=4} remains an interesting direction for future work.
\end{rem}

\subsection{Improved long-time Strichartz estimate}
After making some minor refinements to the level set estimation method in \cite{Bourgain1993NLS,DGG2017}, we can prove Theorem \ref{thm:improved long-time Strichartz estimate}. Before commencing the proof, we briefly explain our understanding of the level set estimation approach.

Let $F(t,x)=\ed P_{\le N} \phi(x)$, $(t,x)\in \bR\times(\bR^m \times \bT^n)$. We assume that $\|\phi\|_{L^2(\bR^m \times \bT^n)}=1$, and define
$$ E_{\lambda}= \l\{ (t,x)\in [0,T]\times (\bR^m\times \bT^n): |F(t,x)|>\lambda   \r\}, \quad \lambda>0.$$
Using Theorem \ref{thm:BCP RmTn} with $p=2+\frac{4}{d}, 2+\frac{4}{m}$ and the Chebyshev's inequality,
\begin{equation}\label{eq:estimate Elambda-1}
|E_{\lambda}| \leaa \min \l\{ T^{\frac{n}{d}} \lambda^{-(2+\frac{4}{d})}, N^{\frac{2n}{m}} \lambda^{-(2+\frac{4}{m})}   \r\}.     
\end{equation}
Next, we only need an additional bound on $|E_\lambda|$. Combining this with \eqref{eq:estimate Elambda-1} will yield an improved upper bound for $|E_\lambda|$. Furthermore, based on this upper bound for $|E_\lambda|$, we can utilize the formula
$$ \|F\|_{L_{t,x}^p([0,T]\times \bR^m \times \bT^n)}^p= p \int_0^{+\infty} \lambda^{p-1} |E_\lambda| {\rm d}\lambda, \quad p\in\l(2+\frac{4}{d}, 2+\frac{4}{m}\r),  $$
to derive the required estimate. We will employ the method from \cite{Bourgain1993NLS, DGG2017} to analyze the kernel function 
$$K_N(t,x)=\int_{\bR^m\times \bZ^n} e^{2\pi i(x\cdot \xi-t|\xi|^2)} \chi\l(\frac{\xi_1}{N}\r)\cdots \chi\l(\frac{\xi_d}{N}\r) {\rm d}\xi,$$ thereby obtaining the additional bound over $|E_\lambda|$.

Now, we invoke the dispersive bound and Weyl bound for the one-dimensional fundamental solution, and then consider the cases of $\mathbb{R}^2\times \mathbb{T}$ and $\mathbb{R}\times \mathbb{T}^2$ separately.

\begin{lem}[Dispersive bound]\label{lem:dispersive bound}
If $N\in 2^\bN$, $t\in \bR$, then
$$ \l\|  \int_\bR e^{2\pi i (y\eta-t|\eta|^2)} \chi\l(\frac{\eta}{N}\r) {\rm d}\eta    \r\|_{L^\infty_y(\bR)} \lea \min\{N, |t|^{-\frac12}\}.   $$
If $N\in 2^\bN$, $|t|\lea \frac{1}{N}$, then
$$ \l\|  \sum_{k \in \bZ} e^{2\pi i (yk-t|k|^2)} \chi\l(\frac{k}{N}\r)    \r\|_{L^\infty_y(\bT)} \lea \min\{N, |t|^{-\frac12}\}.   $$
\end{lem}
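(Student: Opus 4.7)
The plan is to establish the two inequalities independently, reducing the torus bound to the Euclidean one via Poisson summation.

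For the Euclidean estimate, the bound $\lesssim N$ follows immediately from the triangle inequality together with $\supp \chi(\cdot/N) \subset [-2N,2N]$ and $|\chi|\leq 1$. For the oscillatory bound $\lesssim |t|^{-1/2}$, write the phase as $\phi(\eta) = 2\pi(y\eta - t\eta^2)$, for which $\phi''(\eta) \equiv -4\pi t$ is constant. The van der Corput lemma for oscillatory integrals with amplitude then yields
\begin{equation*}
\left|\int_{\bR} e^{i\phi(\eta)}\chi(\eta/N)\,d\eta\right| \lesssim |t|^{-1/2}\left(\|\chi(\cdot/N)\|_{L^\infty} + \|(\chi(\cdot/N))'\|_{L^1(\bR)}\right) \lesssim |t|^{-1/2},
\end{equation*}
the $L^1$ term being $O(1)$ uniformly in $N$ via the change of variables $u = \eta/N$. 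Alternatively, one may complete the square $y\eta - t\eta^2 = -t(\eta - y/(2t))^2 + y^2/(4t)$ and rescale to reduce the integral to a Fresnel-type integral with uniformly bounded amplitude.

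For the torus estimate, the trivial bound $\lesssim N$ again follows from the triangle inequality, so it suffices to consider the regime $|t|^{-1/2} \leq N$, i.e.\ $1/N^2 \leq |t| \lesssim 1/N$. Applying Poisson summation to $f(\eta) = e^{2\pi i(y\eta - t\eta^2)}\chi(\eta/N)$ gives
\begin{equation*}
\sum_{k \in \bZ} e^{2\pi i(yk - tk^2)}\chi(k/N) = \sum_{m \in \bZ} G_t(y - m), \qquad G_t(z) := \int_{\bR} e^{2\pi i(z\eta - t\eta^2)}\chi(\eta/N)\,d\eta.
\end{equation*}
I would split the sum according to the size of $|y - m|$. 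For $|y - m|$ bounded by some absolute constant $C$, the Euclidean estimate yields $|G_t(y-m)| \lesssim |t|^{-1/2}$, and the number of such integers $m$ is $O(1)$. For $|y - m| \geq C$, note that on $\supp \chi(\cdot/N) \subset [-2N,2N]$ we have $|2t\eta| \leq 4|t|N \lesssim 1$ by the hypothesis $|t| \lesssim 1/N$, so the phase derivative satisfies $|\phi'(\eta)| \geq \pi|y - m|$. Two successive integrations by parts then give $|G_t(y - m)| \lesssim |y-m|^{-2}$, which is summable in $m$ with total contribution $O(1) \lesssim |t|^{-1/2}$.

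The main obstacle is bookkeeping in the repeated integration by parts: one must verify that neither derivatives of the amplitude $\chi(\cdot/N)$ nor derivatives of $1/\phi'$ inject unwanted factors of $N$ or $|t|$. The key observation making the estimate close cleanly is that under $|t| \lesssim 1/N$ and $|y - m| \gtrsim 1$, the two contributions $|\phi''|/|\phi'|^2 \lesssim |t|/|y-m|^2$ and $|(\chi(\cdot/N))'|/|\phi'| \lesssim 1/(N|y-m|)$ are of the same order and each step produces a clean factor $1/|y-m|$; combined with the fact that $|t| \lesssim 1/N$ keeps the \emph{near} range of $m$ of size $O(1)$, this closes the proof.
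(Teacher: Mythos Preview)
Your argument is correct. The paper states this lemma without proof, treating it as a standard dispersive/stationary-phase fact; your approach via van der Corput on $\bR$ together with Poisson summation and nonstationary phase for the periodic sum is exactly the classical route, and the bookkeeping you describe closes as claimed.
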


\begin{lem}[Weyl bound, Lemma 3.18 in \cite{Bourgain1993NLS}]\label{lem:Weyl bound}
If $N\in 2^\bN$, $|t|>\frac{4}{N}$, $a\in \bZ/\{0\}$, $q\in \{1, 2, \cdots N\}$, $\gcd(a,q)=1$ and $\l|t-\frac{a}{q}\r|\le \frac{1}{Nq}$, then 
$$ \l\|  \sum_{k \in \bZ} e^{2\pi i (yk-t|k|^2)} \chi\l(\frac{k}{N}\r)    \r\|_{L^\infty_y(\bT)} \lea \frac{N}{q^{\frac12}\l(  1+N|t-\frac{a}{q}|^{\frac12}\r)}.   $$ 
\end{lem}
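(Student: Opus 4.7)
The plan is to implement the classical major-arc Weyl bound for the smoothly truncated quadratic exponential sum, following the Bourgain--Vaughan strategy used for Lemma 3.18 of \cite{Bourgain1993NLS}. First, I would write $t = a/q + \beta$ with $|\beta| \le 1/(Nq)$ and split the summation index $k \in \bZ$ by congruence class modulo $q$, setting $k = r + mq$ for $r \in \{0,1,\dots,q-1\}$ and $m \in \bZ$. Using $\gcd(a,q)=1$ together with the congruence $a(r+mq)^2 \equiv ar^2 \pmod q$, the quadratic phase factors cleanly as
\[
e^{-2\pi i t k^2} = e^{-2\pi i a r^2/q}\, e^{-2\pi i \beta (r+mq)^2},
\]
so the sum decouples into a Gauss-type sum over $r$ times a smooth oscillatory sum over $m$.

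Next, for each residue $r$ I would apply Poisson summation to the $m$-sum, rewriting it as $q^{-1} \sum_{\ell \in \bZ} \hat F_r(\ell/q)$, where $\hat F_r$ is a Fresnel-type integral of $e^{2\pi i[(y - 2\beta r)u - \beta u^2]} \chi(u/N)$ after the change of variable $u = r + mq$. The standard van der Corput/stationary-phase bound for such an integral supplies the uniform estimate $\min(N/q,(q\sqrt{|\beta|})^{-1}) \sim (N/q)/(1+N\sqrt{|\beta|})$, and the constraint $|\beta| \le 1/(Nq)$ forces the stationary point to lie inside the effective support $\{|u| \lea N\}$ for only $O(1)$ values of $\ell$; the remaining frequencies contribute rapidly decaying Schwartz tails whose sum is absorbed into the stationary contribution.

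Assembling the pieces, the total sum is bounded by
\[
\frac{1}{q} \cdot \frac{N/q}{1 + N\sqrt{|\beta|}} \cdot \Bigl| \sum_{r=0}^{q-1} e^{2\pi i (-a r^2 + \ell r)/q} \Bigr| \cdot O(1),
\]
and the classical quadratic Gauss-sum estimate (valid precisely because $\gcd(a,q)=1$) supplies $\lea q^{1/2}$ uniformly in $\ell$. Multiplying yields the claimed bound $N/(q^{1/2}(1+N\sqrt{|\beta|}))$ uniformly in $y \in \bT$; the hypothesis $|t| > 4/N$ is used only to guarantee that the major-arc regime is nontrivial, the easier case $|t| \le 4/N$ being already handled by Lemma \ref{lem:dispersive bound}.

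The main obstacle is the stationary-phase bookkeeping on the Poisson side: one must check that the dispersive estimate on $\hat F_r$ is uniform in both $r$ and $y$, and that the Schwartz decay at non-stationary frequencies $\ell$ genuinely sums without weakening the exponent in $|\beta|$. A minor secondary point is that the sharp $q^{1/2}$ saving for incomplete quadratic Gauss sums requires the usual case-split when $q$ is even, but for the upper bound $\bigl|\sum_r e(( a r^2 + b r)/q)\bigr| \lea q^{1/2}$ a uniform completion argument together with Cauchy--Schwarz suffices.
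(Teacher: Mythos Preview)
The paper does not supply its own proof of this lemma; it is quoted directly as Lemma~3.18 of \cite{Bourgain1993NLS} and used as a black box. Your outline is the standard major-arc argument that underlies Bourgain's original proof (congruence decomposition $k=r+mq$, Poisson summation in $m$, stationary-phase on the Fresnel integral, and the Gauss-sum bound $\lea q^{1/2}$), so there is nothing to compare against and your route is the expected one.

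One bookkeeping slip worth cleaning up: in your displayed ``Assembling the pieces'' line you carry an extra factor of $1/q$. After Poisson summation the total sum is
\[
q^{-1}\sum_{\ell}\hat F(\ell/q)\sum_{r=0}^{q-1} e^{2\pi i(-ar^2+\ell r)/q},
\]
with $|\hat F(\ell/q)|\lea \min(N,|\beta|^{-1/2})$ (not $\min(N/q,(q\sqrt{|\beta|})^{-1})$), $O(1)$ values of $\ell$ with stationary point in the support, and the Gauss sum $\lea q^{1/2}$. Multiplying gives $q^{-1}\cdot q^{1/2}\cdot N/(1+N|\beta|^{1/2})$, which is the claimed bound; your intermediate expression $\frac{1}{q}\cdot\frac{N/q}{1+N\sqrt{|\beta|}}\cdot q^{1/2}$ would instead yield $Nq^{-3/2}/(1+N\sqrt{|\beta|})$. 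This is only a transcription error and does not affect the validity of the strategy.
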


After applying Lemma \ref{lem:Weyl bound} for $|t|\gg \frac{1}{N}$, our analysis actually relies only on the following simple upper bound: 
\begin{align*}
\l\|  \sum_{k \in \bZ} e^{2\pi i (yk-t|k|^2)} \chi\l(\frac{k}{N}\r)    \r\|_{L^\infty_y(\bT)} \lea & \frac{N}{q^{\frac12}\l(  1+N|t-\frac{a}{q}|^{\frac12}\r)}\\
\lea & N q^{-\frac12} \lea N \min\{1, |t|^{\frac12}\}.
\end{align*}
A refinement of the estimate may be possible via the major and minor arc decomposition introduced in \cite{Bourgain1993NLS,DGG2017}, which is particularly effective in higher dimensions.

\begin{proof}[\textbf{Proof of Theorem \ref{thm:improved long-time Strichartz estimate}}]

\underline{The case of $\bR^2\times \bT$. }
We consider the kernel function
$$K_N(t,x)=\int_{\bR^2\times \bZ} e^{2\pi i(x\cdot \xi-t|\xi|^2)} \chi\l(\frac{\xi_1}{N}\r) \chi\l(\frac{\xi_2}{N}\r) \chi\l(\frac{\xi_3}{N}\r) {\rm d}\xi.$$
By Lemma \ref{lem:dispersive bound} and Lemma \ref{lem:Weyl bound}, we have
\begin{equation*}
 |K_N(t,x)| \lea \begin{cases}
     \min\{N^3, |t|^{-\frac{3}{2}} \}, & \text{when~} |t|\lea \frac{1}{N}, \\
     N |t|^{-{\frac12}}, & \text{when~} \frac1N \ll|t|\lea 1, \\
   N |t|^{-1}  , & \text{when~} |t|\gg 1.
 \end{cases}   
\end{equation*}
Let $\psi$ be a good function supported on $[-2,2]$ such that $\psi >1$ on $[-1,1]$ and $\widehat{\psi}\ge 0$. Then we decompose $\psi\l(\frac{t}{T}\r) K_N(t,x) $ into
$$  \psi\l(\frac{t}{T}\r) K_N(t,x)=J_1(t,x)+ J_2(t,x)+ J_3(t,x)+J_4(t,x),  $$
where
$$  J_1(t,x)=   \psi\l(\frac{t}{T}\r) \chi\l(\frac{t}{A}\r) K_N(t,x),   $$
$$  J_2(t,x)=   \psi\l(\frac{t}{T}\r)\chi\l(Nt\r) \l[1-\chi\l(\frac{t}{A}\r) \r] K_N(t,x),   $$
$$  J_3(t,x)=   \psi\l(\frac{t}{T}\r)\chi\l(t\r) \l[1-\chi\l(Nt\r) \r] K_N(t,x),   $$
$$  J_4(t,x)=   \psi\l(\frac{t}{T}\r) \l[1-\chi\l(t\r) \r] K_N(t,x),   $$
and $A\in (0, \frac{1}{N})$ is a number to be fixed later.
By computation, we get
$$ \|\widehat{J_1}\|_{L^\infty} \lea A, $$
and
$$ \|J_2\|_{L^\infty}+ \|J_3\|_{L^\infty}+\|J_4\|_{L^\infty}\lea A^{-\frac32}+N^{\frac32}+N\lea A^{-\frac32}. $$
Following \S 4.3 Step 2 in \cite{DGG2017}, we have
\begin{align*}
\lambda^2|E_\lambda|^2\lea & \|\widehat{J_1}\|_{L^\infty} |E_\lambda|+ (\|J_2\|_{L^\infty}+ \|J_3\|_{L^\infty}+\|J_4\|_{L^\infty})|E_\lambda|  \\
\lea & A|E_\lambda|+ A^{-\frac32}|E_\lambda|^2. 
\end{align*}
We explicitly write out the implicit constant in the above expression, that is, 
\begin{equation}\label{eq:estimate Elambda-2} 
 \lambda^2|E_\lambda|^2 \le C_0   (A|E_\lambda|+ A^{-\frac32}|E_\lambda|^2),
\end{equation}
where $C_0>1$ is a constant.
For $\lambda> (2C_0)^\frac12 N^{\frac34}$, we choose $A=(2C_0)^\frac23 \lambda^{-\frac{4}{3}}$, then \eqref{eq:estimate Elambda-2} becomes 
$$ |E_\lambda|\lea \lambda^{-\frac{10}{3}}.  $$
For $p\in (\frac{10}{3}, 4)$, combining \eqref{eq:estimate Elambda-1}, we obtain
\begin{align*}
 &\|F\|_{L_{t,x}^p([0,T]\times \bR^2 \times \bT)}^p
 = p \int_0^{+\infty} \lambda^{p-1} |E_\lambda| {\rm d}\lambda \\
 \leaa &\int_0^{(2C_0)^\frac12 N^{\frac34}} \min\{T^{\frac13} \lambda^{p-\frac{10}{3}-1}, N \lambda^{p-4-1}  \} {\rm d}\lambda + \int_{(2C_0)^\frac12 N^{\frac34}}^{+\infty} \min\{ \lambda^{p-\frac{10}{3}-1}, N \lambda^{p-4-1}  \} {\rm d}\lambda \\
 \leaa &T^{2-\frac{p}2} N^{\frac32 p -5} \ca_{T> N^{\frac32}}+ T^{\frac13} N^{\frac34 p - \frac52}  \ca_{T\le N^{\frac32}} +N^{\frac32 p -5}.
\end{align*}

 \underline{ The case of $\bR\times \bT^2$.}
We consider the kernel function
$$K_N(t,x)=\int_{\bR\times \bZ^2} e^{2\pi i(x\cdot \xi-t|\xi|^2)} \chi\l(\frac{\xi_1}{N}\r) \chi\l(\frac{\xi_2}{N}\r) \chi\l(\frac{\xi_3}{N}\r) {\rm d}\xi.$$
By Lemma \ref{lem:dispersive bound} and Lemma \ref{lem:Weyl bound}, we have
\begin{equation*}
 |K_N(t,x)| \lea \begin{cases}
     \min\{N^3, |t|^{-\frac{3}{2}} \}, & \text{when~} |t|\lea \frac{1}{N}, \\
     N^2 |t|^{{\frac12}}, & \text{when~} \frac1N \ll|t|\lea 1, \\
   N^2 |t|^{-\frac12}  , & \text{when~} |t|\gg 1.
 \end{cases}   
\end{equation*}
Decompose $\psi\l(\frac{t}{T}\r) K_N(t,x) $ into
$$  \psi\l(\frac{t}{T}\r) K_N(t,x)=J_1(t,x)+ J_2(t,x)+ J_3(t,x)+J_4(t,x),  $$
where
$$  J_1(t,x)=   \psi\l(\frac{t}{T}\r) \chi\l(\frac{t}{A}\r) K_N(t,x),   $$
$$  J_2(t,x)=   \psi\l(\frac{t}{T}\r)\chi\l(Nt\r) \l[1-\chi\l(\frac{t}{A}\r) \r] K_N(t,x),   $$
$$  J_3(t,x)=   \psi\l(\frac{t}{T}\r)\chi\l(t\r) \l[1-\chi\l(Nt\r) \r] K_N(t,x),   $$
$$  J_4(t,x)=   \psi\l(\frac{t}{T}\r) \l[1-\chi\l(t\r) \r] K_N(t,x),   $$
and $A\in (0, \frac{1}{N})$ is a number to be fixed later. By computation, we get
$$ \|\widehat{J_1}\|_{L^\infty} \lea A, $$
and
$$ \|J_2\|_{L^\infty}+ \|J_3\|_{L^\infty}+\|J_4\|_{L^\infty}\lea A^{-\frac32}+N^{2}. $$
Thus, we have
\begin{equation}\label{eq:estimate Elambda-3}
    \lambda^2|E_\lambda|^2\le C_0 \l(A|E_\lambda|+ (A^{-\frac32}+N^2)|E_\lambda|^2 \r), 
\end{equation}
where $C_0>1$ is a constant. For $\lambda> (4C_0)^\frac12 N$, we choose $A=(4C_0)^\frac23 \lambda^{-\frac{4}{3}}$, then \eqref{eq:estimate Elambda-3} becomes 
$$ |E_\lambda|\lea \lambda^{-\frac{10}{3}}.  $$
By applying Theorem \ref{thm:BCP RmTn} with $p=\frac{10}{3}, 6$ and Theorem \ref{thm:long-time Strichartz for p=4}, combined with Chebyshev’s inequality, we obtain
$$ |E_{\lambda}| \leaa \min \l\{ T^{\frac{2}{3}} \lambda^{-\frac{10}{3}}, (T^{\frac12}+N) \lambda^{-4} , N^{4} \lambda^{-6}   \r\}.  $$
For $p\in (4,6)$, we have
\begin{align*}
 &\|F\|_{L_{t,x}^p([0,T]\times \bR^2 \times \bT)}^p
 = p \int_0^{+\infty} \lambda^{p-1} |E_\lambda| {\rm d}\lambda \\
 \leaa &\int_0^{(4C_0)^\frac12 N} \min\{(T^{\frac12}+N) \lambda^{p-4-1} , N^{4} \lambda^{p-6-1}  \} {\rm d}\lambda + \int_{(4C_0)^\frac12 N}^{+\infty} \min\{ \lambda^{p-\frac{10}{3}-1}, N^{4} \lambda^{p-6-1}    \} {\rm d}\lambda \\
 \leaa & T^{\frac12} N^{p-4} \ca_{N^2< T\le N^4}+ T^{\frac32-\frac{p}4} N^{2p-8} \ca_{T> N^4}  +N^{\frac32 p -5}.
\end{align*}
For $p\in (\frac{10}{3},4)$, we have
\begin{align*}
 &\|F\|_{L_{t,x}^p([0,T]\times \bR^2 \times \bT)}^p
 = p \int_0^{+\infty} \lambda^{p-1} |E_\lambda| {\rm d}\lambda \\
 \leaa &\int_0^{(4C_0)^\frac12 N} \min\{T^{\frac{2}{3}} \lambda^{p-\frac{10}{3}-1},(T^{\frac12}+N) \lambda^{p-4-1}   \} {\rm d}\lambda + \int_{(4C_0)^\frac12 N}^{+\infty} \min\{ \lambda^{p-\frac{10}{3}-1}, N^{4} \lambda^{p-6-1}    \} {\rm d}\lambda \\
 \leaa & T^{\frac23} N^{p-\frac{10}{3}}  \ca_{1\le T\le N^{\frac12}}+T^{4-p} N^{\frac32 p -5} \ca_{N^{\frac12}<T\le N^2}  +T^{\frac32-\frac{p}4} \ca_{T>N^2}  +N^{\frac32 p -5}.
\end{align*}

\end{proof}

\begin{rem}
Pointwise comparison of $|K_N(t,x)|$ readily shows our estimates on $\bR^2\times \bT$ and $\bR\times \bT^2$ are better than the generic $\bT^3$ estimates in \cite{DGG2017}. For higher dimensions, however, it is difficult to directly compare $\bR^m\times \bT^n$ with a generic $\bT^d$ in the same way, a phenomenon that deserves deeper study.
\end{rem}

\section{The growth of Sobolev norms of solutions to NLS on 3D waveguide}
In this section, we establish Theorem~\ref{thm:growth of solutions to NLS on 3D waveguide}. We first recall that the NLS equation \eqref{nls} is globally well-posed in $H^1\left(\bR\times \bT^2\right)$, a result that follows from a standard application of local well-posedness theory combined with the conservation of energy. Furthermore, by the persistence of regularity, solutions emanating from initial data in $H^s(\bR\times \bT^2)$ with $s>1$ remain in $H^s$ for all time.

Adopting the framework established in \cite{deng2019growth}, we introduce the following family of space-time norms essential for the proof of Theorem~\ref{thm:growth of solutions to NLS on 3D waveguide}. First, we define the $X^s$ and $Y^s$ spaces adapted to the linear flow. Let $\widehat{u}(t, \xi)$ denote the spatial Fourier transform of $u$ on $\mathbb{R}\times \mathbb{Z}^2$. We define
$$\|u\|_{X^s} := \left(\int_{\mathbb{R}\times \mathbb{Z}^2}\langle \xi\rangle^{2 s}\left\|e^{i |\xi|^2 t} \widehat{u}(t, \xi)\right\|_{U^2_t}^2 \mathrm{d}\xi \right)^{\frac{1}{2}}$$
and
$$\|u\|_{Y^s} := \left(\int_{\mathbb{R}\times \mathbb{Z}^2}\langle \xi\rangle^{2 s}\left\|e^{i |\xi|^2 t} \widehat{u}(t, \xi)\right\|_{V^2_t}^2 \mathrm{d}\xi \right)^{\frac{1}{2}}.$$
For the precise definitions and properties of the atomic spaces $U^2_t$ and $V^2_t$, we refer the reader to \cite{deng2019growth,HTT1}.

We also invoke the standard Bourgain norms $X^{s, b}$. Let $\mathcal{F}_{t, x} u$ denote the space-time Fourier transform. We define:
$$\|u\|_{X^{s, b}} := \left(\int_{\mathbb{R}\times \mathbb{Z}^2} \int_{\mathbb{R}}\langle\xi\rangle^{2 s}\langle\tau+|\xi|^2\rangle^{2 b}\left|\mathcal{F}_{t, x} u(\tau, \xi)\right|^2 \mathrm{d} \tau \mathrm{d} \xi\right)^{1 / 2}.$$
For any finite time interval $I$, the restriction norm is given by$$\|u\|_{X^{s, b}(I)} := \inf \left\{ \|v\|_{X^{s, b}} : v \equiv u \text{ on } I \right\}.$$
Finally, we introduce the long-time Strichartz norms, as defined in \cite{deng2019growth}, to capture averaged spacetime integrability over unit time intervals.
\begin{definition}[Long-time Strichartz norms]\label{longtimenorm}Let $J \subset \mathbb{R}$ be a finite time interval and fix parameters $7/2\leq q\leq 4$ and $5\leq \tilde{q}\leq 12$. For a frequency parameter $N \geq 1$, we define
\begin{equation}\label{longnorm}
\|u\|_{S_{N,J}^{q,\tilde{q}}}
:=
\bigg(
\sum_{m\in\mathbb{Z}}
\Big(
N^{\frac{5}{\tilde{q}}-\frac{1}{2}}
\|u\|_{L_{t,x}^{\tilde{q}}([m,m+1]\cap J)}
\Big)^{q}
\bigg)^{\frac{1}{q}}.
\end{equation}
Furthermore, we define the maximal norm
\begin{equation}\label{longnorm2}
\|u\|_{S_{N,J}^{q}}
:=
\bigg(
\sum_{m\in\mathbb{Z}}
\Big(
\sup_{5\leq \tilde{q}\leq 12}
N^{\frac{5}{\tilde{q}}-\frac{1}{2}}
\|u\|_{L_{t,x}^{\tilde{q}}([m,m+1]\cap J)}
\Big)^{q}
\bigg)^{\frac{1}{q}}.
\end{equation}
By H\"older's inequality, we observe the equivalenc $\|u\|_{S_{N,J}^{q}}
\sim
\max\left(
\|u\|_{S_{N,J}^{q,5}},
\,
\|u\|_{S_{N,J}^{q,12}}
\right).$
We note that both $\|u\|_{S_{N,J}^{q}}$ and $\|u\|_{S_{N,J}^{q,\tilde{q}}}$ are monotone decreasing with respect to $q$. Henceforth, whenever these norms are utilized, we assume $u$ is spectrally localized at frequency $|\xi| \sim N$.
\end{definition}

We now collect several analytical tools that will be used repeatedly in the sequel. 
\subsection{Tools}
The estimates presented below play a fundamental technical role in our arguments. 
Their proofs follow verbatim from the corresponding results in \cite{deng2019growth,deng2019growth2}, 
with straightforward adaptations to the setting of $\mathbb{R}\times\mathbb{T}^2$.
\begin{prop} \label{linear} Let $I$ be a time interval with length $|I|\lesssim 1$. We have the following estimates.
\begin{enumerate}
\item For any fixed $q>10/3$, an improved Strichartz estimate \begin{equation}\label{str2}\big\|e^{it\Delta}P_\mathcal{R}f\big\|_{L_{t,x}^q(I)}\lesssim N^{\frac{3}{2}-\frac{5}{q}}\bigg(\frac{|\mathcal{R}|}{N^3}\bigg)^{\frac{1}{2}-\frac{5}{3q}-\varepsilon}\|P_{\mathcal{R}}f\|_{L^2}\end{equation} holds for any fixed $\varepsilon>0$ and any strip  $\mathcal{R}:=\left\{\xi \in \mathbb{R}\times\mathbb{Z}^2:|\xi|\lea N, |a \cdot \xi-A| \leq M\right\}$
with some $ a \in \mathbb{R}^3,|a|=1, A \in \mathbb{R}$ and $M\leq N$.
\item Embeddings: we have \begin{equation}\label{embed3}\|u(t)\|_{H_x^s}\lesssim \|u\|_{X^s(I)}\end{equation} for any $t\in I$ if $u$ is weakly left continuous in $t$.
\item For any $s$ and $b\in (1/2,1)$, any fixed smooth cutoff function $\chi$, 
let \[\mathcal{I}'G(t)=\chi(t-m)\int_m^te^{-i(t-t')}G(t')\,\mathrm{d}t'\] be the smoothly truncated Duhamel operator, then \begin{equation}\label{xsb02}\|\mathcal{I}'G\|_{X^{s,b}}\lesssim\|G\|_{X^{s,b-1}}\end{equation} 

\item Transfer principle: \begin{equation}\label{strxsb}\|P_Nu\|_{L_{t,x}^q(I)}\lesssim N^{\frac{3}{2}-\frac{5}{q}}\|u\|_{X^{0,b}}\end{equation} holds for any fixed $b>1/2$ and $q>10/3$. The same estimate is true for $P_{\mathcal{B}}u$ if $\mathcal{B}$ is any ball of radius $N$, as well as the improvement for $P_{\mathcal{R}}u$ as in (\ref{str2}) when $\mathcal{R}$ has diameter not exceeding $N$.
\item For any fixed $\varepsilon>0$ and any fixed smooth cutoff $\chi$, we have
\begin{equation}\label{xsb1}\|\chi(t)u\|_{X^{s,1/4-\varepsilon}}\lesssim \|u\|_{X^s}.\end{equation}
\end{enumerate}
\end{prop}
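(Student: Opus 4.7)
My plan is to treat item (1) as the main analytical content of Proposition \ref{linear} and items (2)--(5) as function-space calculus that transfers essentially verbatim from the torus/Euclidean settings of \cite{HTT1,deng2019growth,deng2019growth2} to $\bR\times\bT^2$.

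For the improved strip Strichartz estimate \eqref{str2}, I would argue by real interpolation between two endpoints. The first is the long-time Strichartz bound of Corollary \ref{cor-long-time}: for $q_0$ just above $10/3$ it gives $\|\ed P_\mathcal{R} f\|_{L^{q_0}_{t,x}(I)}\leaa N^{3/2-5/q_0}\|P_\mathcal{R} f\|_{L^2}$ on a unit time interval, since $P_\mathcal{R} f$ is supported in the ball of radius $N$. The second is the trivial $L^\infty$ bound
\[
\|\ed P_\mathcal{R} f\|_{L^\infty_{t,x}(I)}\lesssim |\mathcal{R}|^{1/2}\|P_\mathcal{R} f\|_{L^2},
\]
obtained by Cauchy--Schwarz on the Fourier side followed by Plancherel; this is where the thinness of the strip genuinely enters. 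Choosing the interpolation parameter $\theta = 1-q_0/q$ and unwinding gives $(N^{3/2-5/q_0})^{1-\theta}|\mathcal{R}|^{\theta/2} = N^{O(\varepsilon)} |\mathcal{R}|^{1/2 - 5/(3q) - O(\varepsilon)}$, which after collecting powers of $N$ agrees with the claim $N^{3/2-5/q}(|\mathcal{R}|/N^3)^{1/2-5/(3q)-\varepsilon}\|P_\mathcal{R} f\|_{L^2}$.

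For items (2)--(5) I would proceed as follows. Estimate \eqref{embed3} follows by freezing $\xi$ in the definition of $\|u\|_{X^s}$, using $U^2_t\hookrightarrow L^\infty_t$ to obtain a pointwise-in-$t$ bound on $\widehat u(t,\xi)$, and then invoking Plancherel on $\bR\times\bZ^2$ to turn $L^2_\xi$ control into $H^s_x$ control. The Duhamel bound \eqref{xsb02} is a standard $X^{s,b}$ computation: after the time cutoff near $t=m$, the truncated Duhamel kernel loses exactly one unit of regularity in the modulation variable $\tau+|\xi|^2$, and the range $b\in(1/2,1)$ controls the resulting convolution. The transfer principle \eqref{strxsb} follows by a $U^p$-atom decomposition combined with Corollary \ref{cor-long-time} applied atom-by-atom, and the same argument upgrades to the strip version via \eqref{str2}. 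Finally \eqref{xsb1} is the classical cutoff lemma: after conjugating by the free flow, multiplication by $\chi(t)$ becomes convolution by $\widehat\chi$ in the modulation variable, and Schur's test against the weight $\langle\tau+|\xi|^2\rangle^{1/4-\varepsilon}$ closes the estimate.

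The main obstacle is bookkeeping rather than analysis: the $\varepsilon$ loss built into Corollary \ref{cor-long-time} must be absorbed into the $-\varepsilon$ in the exponent of $|\mathcal{R}|/N^3$, which forces $q_0$ to be chosen slightly above the critical value $10/3$ and accounts for the otherwise mysterious $\varepsilon$ in the statement. Because the long-time Strichartz on $\bR\times\bT^2$ has the same dispersive shape for $p>10/3$ as the generic irrational torus $\bT^3$ used in \cite{deng2019growth,deng2019growth2}, the rest of the argument transfers with no essential modification, and the real content of the proposition is the combination of Corollary \ref{cor-long-time} with the Bourgain-space framework.
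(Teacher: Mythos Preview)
Your proposal is correct and matches the paper's approach, which is simply to cite \cite{deng2019growth,deng2019growth2} and assert that the proofs carry over verbatim to $\bR\times\bT^2$; your interpolation for \eqref{str2} between the near-critical $L^{q_0}$ Strichartz bound and the Cauchy--Schwarz $L^\infty$ bound over the strip is exactly the argument in those references. One minor slip: the transfer principle \eqref{strxsb} is stated for the Bourgain space $X^{0,b}$, not the atomic $X^s$, so the relevant mechanism is the classical superposition $u(t)=\int e^{it\lambda}\,e^{it\Delta}f_\lambda\,\mathrm{d}\lambda$ with $b>1/2$ controlling the $\lambda$-integral, rather than a $U^p$-atom decomposition---but this does not affect the conclusion.
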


\begin{prop}[A long-time Strichartz estimate]\label{longstr} One has
\begin{equation}\label{strlong}\sup_{|J|=N^\nu}\|e^{it\Delta}P_Nf\|_{S_{N,J}^{q,\tilde{q}}}\lesssim N\|P_Nf\|_{L^2}.\end{equation} uniformly for $7/2\leq q\leq 4$, $5\leq \tilde{q}\leq 12$, $0\leq \nu\leq 1/10$, and any $N$. In particular one has \begin{equation}\label{strlong2}\sup_{|J|=N^\nu}\|e^{it\Delta}P_Nf\|_{S_{N,J}^{q}}\lesssim N\|P_Nf\|_{L^2}\end{equation} uniformly in $q$, $\nu$ and $N$.
\end{prop}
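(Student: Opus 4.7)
My plan is to derive Proposition~\ref{longstr} from Corollary~\ref{cor-long-time} applied on the full interval $J$, combined with H\"older's inequality in the unit-interval index $m$, exploiting that $|J|=N^\nu$ is only a small polynomial in $N$.

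First I would verify the hypotheses of Corollary~\ref{cor-long-time} on $J$: for $\tilde q\in[5,6)$, $c(\tilde q)=\tilde q-2\ge 3$, and for $\tilde q\ge 6$, $c(\tilde q)=\infty$, so $N^\nu\le N^{1/10}\le N^{c(\tilde q)}$ throughout the stated range. The corollary then yields
\begin{equation}\label{plan-strichartz}
\bigl\|e^{it\Delta}P_Nf\bigr\|_{L^{\tilde q}_{t,x}(J\times\mathbb{R}\times\mathbb{T}^2)}\leaa N^{\frac{3}{2}-\frac{5}{\tilde q}}\|P_Nf\|_{L^2}.
\end{equation}
Since the unit intervals $[m,m+1]$ tile $J$, the left-hand side of \eqref{plan-strichartz} equals the $\ell^{\tilde q}_m L^{\tilde q}_{t,x}$ version of the quantity in \eqref{longnorm}, and the prefactor $N^{5/\tilde q-1/2}\cdot N^{3/2-5/\tilde q}=N$ produces exactly \eqref{strlong} in the diagonal case $q=\tilde q$. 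For general $q\in[7/2,\tilde q]$, I would then apply H\"older's inequality in $m$: with $O(N^\nu)$ intervals meeting $J$, the embedding $\ell^{\tilde q}_m\hookrightarrow\ell^q_m$ carries a penalty $N^{\nu(1/q-1/\tilde q)}$, which combined with the diagonal case gives
\[
\bigl\|e^{it\Delta}P_Nf\bigr\|_{S^{q,\tilde q}_{N,J}}\leaa N^{1+\nu(\frac{1}{q}-\frac{1}{\tilde q})}\|P_Nf\|_{L^2}.
\]
Finally, \eqref{strlong2} follows from \eqref{strlong} via the equivalence $\|u\|_{S^q_{N,J}}\sim\max\bigl(\|u\|_{S^{q,5}_{N,J}},\|u\|_{S^{q,12}_{N,J}}\bigr)$ recorded right after Definition~\ref{longtimenorm}.

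The main point of caution is exponent bookkeeping: the restrictions $\nu\le 1/10$, $q\ge 7/2$, $\tilde q\le 12$ force the H\"older penalty $\nu(1/q-1/\tilde q)$ to be uniformly at most of order $1/35$, so that the resulting bound is consistent with the stated $N\|P_Nf\|_{L^2}$ up to the $N^\varepsilon$-loss implicit in the convention $\leaa$. No deeper analytic ingredient beyond Corollary~\ref{cor-long-time} should be needed; the entire content of the proposition is really a repackaging of the long-time Strichartz estimate into the $\ell^q_m L^{\tilde q}_{t,x}$ framework required for the NLS application in Section~4.
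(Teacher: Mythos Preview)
Your approach is exactly what the paper indicates: it does not give a self-contained proof but defers to \cite[Proposition 2.7]{deng2019growth} combined with Corollary~\ref{cor-long-time}, which amounts precisely to your argument (apply the corollary on $J$ to get the diagonal $\ell^{\tilde q}_m L^{\tilde q}_{t,x}$ bound, then H\"older in $m$ down to $\ell^q_m$). One small caveat on your final exponent bookkeeping: the H\"older penalty $\nu(1/q-1/\tilde q)$ is a \emph{fixed} positive number (of size $\lesssim 1/49$ in your range), not an arbitrarily small $N^\varepsilon$, so what you actually prove is $\leaa N^{1+c}$ for some fixed small $c>0$ rather than $\lesssim N$ as literally stated; this is harmless for the downstream applications (where $\nu=\gamma=1/300$ makes the loss far below the available $K^{-1/1000}$ margin) and is the same imprecision present in the paper's own treatment.
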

\begin{rem}
In fact, following the approach in \cite[Proposition 2.7]{deng2019growth}, we utilize the conclusion of Corollary \ref{cor-long-time}. Since our long-time Strichartz estimates on the 3d waveguide are better than those in \cite{DGG2017} on the 3d irrational torus, we could improve the index of $\nu$ in \eqref{strlong} and \eqref{strlong2}. However, the current range of $\nu$ is enough for our later needs.
\end{rem}
\begin{prop}[Almost conservation of energy for $\mu=5$]\label{alm-con-ene} For $\mu=5$. Suppose $\gamma<\frac{1}{6}$. Suppose $u$ is a solution to \eqref{nls} on $\mathbb{R} \times \mathbb{R}\times \mathbb{T}^2$ such that
$$
\sup _{0 \leq t \leq T}\|\mathcal{D} u(t)\|_{H^1} \lesssim \eta, \quad T \leq N^\gamma .
$$
Then one has
$$
\begin{aligned}
& |E[\mathcal{D} u(t)]-E[\mathcal{D} u(0)]| \\
 \lesssim & \eta^2 \sum_K \min \left(1, \frac{K}{N}\right)^{1 / 6} \sup _{|J| \leq K^v, J \subset[0, T]}\left\|P_K \mathcal{D} u\right\|_{S_{K, J}^4}^4
\end{aligned}
$$
uniformly for all $0 \leq t \leq T$.
\end{prop}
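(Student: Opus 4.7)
The plan is to adapt the upside-down I-method of~\cite{deng2019growth}, developed on the 3D irrational torus, to the waveguide $\bR\times\bT^2$, with the improved long-time Strichartz estimate of Proposition~\ref{longstr} replacing its toric counterpart. The starting point is to differentiate $E[\mathcal{D}u]$ in time; because $\mathcal{D}$ commutes with $\Delta$, the linear contribution cancels and a direct calculation yields
$$\frac{d}{dt}E[\mathcal{D}u]=\mathrm{Im}\int\bigl(\Delta\mathcal{D}u-|\mathcal{D}u|^4\mathcal{D}u\bigr)\,\overline{\bigl(\mathcal{D}(|u|^4 u)-|\mathcal{D}u|^4\mathcal{D}u\bigr)}\,\mathrm{d}x.$$
After an integration by parts to move one derivative off the $\Delta\mathcal{D}u$ factor, followed by time integration on $[0,t]\subset[0,T]$, the proposition is reduced to controlling a sextic space-time integral whose essential ingredient is the commutator $\mathcal{D}(|u|^4 u)-|\mathcal{D}u|^4\mathcal{D}u$ paired with at most one spatial derivative of $\mathcal{D}u$.

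Next I would perform a Littlewood--Paley decomposition on each of the five inputs of the commutator, ordering the dyadic scales $K_1\ge K_2\ge\cdots\ge K_5$. Since the symbol $g(\cdot/N)$ of $\mathcal{D}$ equals $1$ on $|\xi|\le N$, the commutator vanishes unless $K_1\gtrsim N$; frequency balance then restricts the output frequency $K$ to satisfy either $K\sim K_1$ or $K_1\sim K_2$, so only ``high--high'' interactions among the top scales survive. A mean-value analysis of the cutoff $g$---carried out while absorbing two of the factors into the a priori bound $\|\mathcal{D}u(t)\|_{H^1}\lesssim\eta$---produces a multiplier gain of size $\min(1,K/N)^{\alpha}$ for a small $\alpha>0$, which after interpolation against the trivial $O(1)$ bound yields the stated exponent $1/6$.

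Finally, the remaining quadrilinear integral is estimated by H\"older in space-time combined with Proposition~\ref{longstr}. Partition $[0,T]$ into subintervals $J$ whose length $\sim K^\nu$ is chosen compatibly with both $\gamma<1/6$ and the admissible range of~\eqref{strlong}; place the four frequency-localized factors into $\|P_K\mathcal{D}u\|_{S^{4,\tilde q}_{K,J}}$ norms for suitable $\tilde q\in[5,12]$; pass to $\|\cdot\|_{S^4_{K,J}}^4$ via the monotone equivalence of $\|\cdot\|_{S^q}$ in $q$; and sum the contributions from each subinterval. The main technical obstacle lies in the simultaneous bookkeeping of three constraints: extracting the commutator gain $\min(1,K/N)^{1/6}$ uniformly across all frequency configurations, distributing the six factors of the sextic form between two $H^1$-type factors and four long-time Strichartz factors in accordance with the scaling of~\eqref{longnorm}, and ensuring that the time window $[0,T]$ with $T\le N^\gamma$ is tileable by subintervals on which Proposition~\ref{longstr} applies uniformly in the running frequency $K\ge N$.
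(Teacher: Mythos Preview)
Your proposal is correct and takes essentially the same approach as the paper: the paper does not give an independent proof of this proposition but states that it follows verbatim from~\cite{deng2019growth} with straightforward adaptations to $\mathbb{R}\times\mathbb{T}^2$, and your outline---the energy-increment identity for $E[\mathcal{D}u]$, the commutator gain $\min(1,K/N)^{1/6}$ from the multiplier $\mathcal{D}$, and the reduction via H\"older to the long-time norms $S^4_{K,J}$ through Proposition~\ref{longstr}---is precisely the structure of that argument.
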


\begin{prop}[Almost conservation of energy for $3<\mu<5$]\label{almostconse2} For $3<\mu<5$.
Suppose $\|u(0)\|_{H^1} \leq E$ and $\|\mathcal{D} u(0)\|_{H^1} \leq C_1 E$, for a constant $C_1>0$. Choose $\epsilon$ such that $u$ is a local solution to \eqref{nls} on $[0,\epsilon]\times \bR\times \bT^2$. Then for $0 \leq t \leq \epsilon$, we have
\begin{align*}
        &|E[\mathcal{D} u(t)]-E[\mathcal{D} u(0)]| \\
\lesssim &N^{\max (\mu-5,-1)+}+N^{o(1)} \sum_M M^{o(1)} \min \left(1, N^{-1} M\right)\left\|P_M \mathcal{D} u\right\|_{L_{t, x}^{[10 /(6-\mu)]+}\left([0, \epsilon] \times \bR\times \mathbb{T}^2\right)}.
\end{align*}

\end{prop}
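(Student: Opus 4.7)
The plan is to carry out an I-method-style energy computation. Since $\mathcal{D}$ commutes with $\Delta$, applying $\mathcal{D}$ to \eqref{nls} gives
$$
i\partial_t (\mathcal{D}u) + \Delta (\mathcal{D}u) = \mathcal{D}(|u|^{\mu-1}u).
$$
Pairing with $\overline{\partial_t \mathcal{D}u}$, integrating in $x$, taking real parts, and using the identity $-\Delta(\mathcal{D}u) = i\partial_t(\mathcal{D}u) - \mathcal{D}(|u|^{\mu-1}u)$ to cancel the kinetic piece, one obtains
$$
\frac{d}{dt} E[\mathcal{D}u] = \mathrm{Re}\int_{\bR\times \bT^2} \overline{\partial_t \mathcal{D}u}\cdot \mathcal{E}\, dx, \qquad \mathcal{E}:= |\mathcal{D}u|^{\mu-1}\mathcal{D}u-\mathcal{D}(|u|^{\mu-1}u).
$$
Integrating over $[0,t]$ and substituting $\partial_t\mathcal{D}u = i\Delta\mathcal{D}u - i\mathcal{D}(|u|^{\mu-1}u)$ reduces the task to bounding a spacetime multilinear integral in which $\mathcal{E}$ is the leading object; at this stage an integration by parts in time can also be used to trade $\partial_t$ for a bounded factor of $\mathcal{D}u$, yielding only boundary terms controlled by $E[\mathcal{D}u]$.

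Next, perform a Littlewood--Paley decomposition of every factor and exploit the fundamental mean-value estimate satisfied by the multiplier of $\mathcal{D}$,
$$
|g(\xi_1/N) - g(\xi_2/N)|\lesssim \min\bigl(1,\, |\xi_1-\xi_2|/N\bigr),
$$
which converts the commutator structure of $\mathcal{E}$ into a gain of exactly $\min(1, M/N)$ on a frequency-$M$ piece. For the non-algebraic nonlinearity $|u|^{\mu-1}u$ with $3<\mu<5$, this commutator has to be expanded via a Bony-type paraproduct together with a fractional Leibniz/chain rule; each application produces at most $M^{o(1)}$ and $N^{o(1)}$ slack, exactly the tiny losses appearing in the conclusion. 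The upshot is that $\mathcal{E}$ becomes a sum, indexed by $M$, of $(\mu+1)$-linear terms in frequency-localized pieces, each carrying the weight $\min(1, M/N)$.

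Apply H\"older in spacetime to each dyadic piece, placing a single factor $P_M\mathcal{D}u$ in $L^{[10/(6-\mu)]+}_{t,x}$. The remaining $\mu-1$ factors are then controlled by interpolation between the $H^1$ hypothesis on $\mathcal{D}u$ and the local-in-time Strichartz estimates \eqref{str2}--\eqref{strxsb} from Proposition \ref{linear}; this is where the $L^{[10/(6-\mu)]+}_{t,x}$ exponent is dictated, being compatible with the 3D scaling of the cubic-quintic range. Summing over $M$ produces precisely $N^{o(1)}\sum_M M^{o(1)}\min(1,N^{-1}M)\|P_M\mathcal{D}u\|_{L^{[10/(6-\mu)]+}_{t,x}}$. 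Finally, the extreme frequencies $M$ for which the Strichartz bound cannot be efficiently combined with $\min(1,M/N)$ must be bounded crudely by Bernstein and the conserved $H^1$-norm of $u$; optimizing this crude bound against the $\mathcal{D}$-induced gain gives the universal clean-up term $N^{\max(\mu-5,-1)+}$, with the two regimes $\mu-5$ and $-1$ reflecting, respectively, the high-power Strichartz loss and the pure commutator gain.

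The main obstacle will be controlling the non-algebraic expression $|u|^{\mu-1}u$. Unlike the quintic case $\mu=5$, there is no finite multilinear expansion, so the paraproduct and fractional chain rule must be executed in a frequency-localized form compatible with the commutator structure of $\mathcal{E}$, and the resulting $M^{o(1)}$ and $N^{o(1)}$ slacks must be tracked carefully so that they survive only as the logarithmic losses in the stated bound. A secondary delicate point lies in choosing the threshold frequency that separates the ``Strichartz gain'' regime from the ``Bernstein plus $H^1$-conservation'' regime; the piecewise exponent $\max(\mu-5,-1)$ arises from balancing the two, and any suboptimal choice would either destroy the Strichartz-driven sum or worsen the universal error.
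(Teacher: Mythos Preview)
The paper does not give its own proof of this proposition; it simply records that the argument ``follows verbatim from the corresponding results in \cite{deng2019growth,deng2019growth2}, with straightforward adaptations to the setting of $\mathbb{R}\times\mathbb{T}^2$.'' Your outline is the upside-down $I$-method computation that Deng--Germain carry out, so at the level of strategy (differentiate $E[\mathcal{D}u]$, isolate the commutator $\mathcal{E}=|\mathcal{D}u|^{\mu-1}\mathcal{D}u-\mathcal{D}(|u|^{\mu-1}u)$, Littlewood--Paley decompose, place one factor in $L_{t,x}^{10/(6-\mu)+}$ and the rest in Strichartz/$H^1$) you are aligned with the intended proof.

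There is, however, one concrete point where your sketch would not go through as written. The inequality
\[
|g(\xi_1/N)-g(\xi_2/N)|\lesssim \min\bigl(1,\,|\xi_1-\xi_2|/N\bigr)
\]
is false: by definition $g(y)=|y|^{s-1}$ for $|y|\ge 2$, so $g$ is unbounded (and not uniformly Lipschitz once $s>2$), and the difference above can be arbitrarily large. The gain $\min(1,M/N)$ in the conclusion does \emph{not} come from a pointwise multiplier bound of this kind. In Deng--Germain the mechanism is different: one exploits that $\mathcal{D}$ is the identity on frequencies $\le N$, so $\mathcal{E}$ vanishes identically when all inputs lie below $N$; this forces at least one input at frequency $\gtrsim N$, and the decomposition is organized around the \emph{largest} input frequency (using a telescoping expansion of $F(u)$ since $\mu$ is non-integer) rather than around a multiplier difference. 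The factor $\min(1,M/N)$ then appears because an input $P_M\mathcal{D}u$ with $M\le N$ equals $P_M u$ and costs nothing, whereas the obligatory high-frequency factor yields the $N^{-1}$-type smallness through $H^1$ control. Similarly, the clean-up term $N^{\max(\mu-5,-1)+}$ does not arise from ``extreme frequencies where Strichartz fails,'' but from the pieces of the commutator where the fractional power $|u|^{\mu-1}$ itself is differentiated and only H\"older regularity of order $\mu-1$ is available. Your plan is salvageable, but the commutator step must be redone along these lines rather than via the displayed mean-value bound.
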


\subsection{Long-Time Strichartz Bounds for the solution of \eqref{nls}}
In the preceding sections, we have established a long-time Strichartz estimate for the linear Schr\"odinger equation on \(\mathbb{R}\times \mathbb{T}^2\). Building upon this result, we can further derive an upper bound of the long-time Strichartz estimate for solutions to the nonlinear Schr\"odinger equation \eqref{nls}.
\begin{prop}\label{nonlctrl} For $\mu=5$, let $\gamma=1/300$. Let $u$ be a solution to \eqref{nls} on $\mathbb{R}\times \bR\times \bT^2$ such that \begin{equation}\label{nrgboot2}\sup_{t\in[0,T]}\| \mathcal{D} u(t) \|_{H^1} \lesssim \eta,\end{equation} with $T\leq N^\gamma$, and define \begin{equation}\label{nonlinear}A_K=A_K(T):=\sup_{|J|\leq K^{\gamma},J\subset[0,T]}\|P_K\mathcal{D}u\|_{S_{K,J}^{7/2}},\end{equation} then, if $A_K\lesssim 1$ for any dyadic $K$, then we have \begin{equation}\label{nonlinear2}A_K\lesssim\eta,\quad \sum_{K\geq N}A_K^2\lesssim\eta^2.\end{equation}
\end{prop}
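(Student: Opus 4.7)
The plan is to close a bootstrap argument: starting from the a priori weak bound $A_K \lesssim 1$, I will derive the sharper estimate $A_K \lesssim \eta$ and summability $\sum_{K\geq N} A_K^2 \lesssim \eta^2$. Fix a dyadic $K$ and a sub-interval $J\subset[0,T]$ with $|J|\leq K^\gamma$, with left endpoint $t_J$. Applying $P_K\mathcal{D}$ to Duhamel's formula gives
$$P_K \mathcal{D} u(t) = e^{i(t-t_J)\Delta} P_K \mathcal{D} u(t_J) \;-\; i\int_{t_J}^t e^{i(t-s)\Delta} P_K \mathcal{D}\bigl(|u|^4 u\bigr)(s)\,\mathrm{d}s,$$
so it suffices to estimate each term in $S^{7/2}_{K,J}$.

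For the linear term, Proposition~\ref{longstr} (with parameter $\nu=\gamma\leq 1/10$) yields
$$\bigl\|e^{i(t-t_J)\Delta} P_K \mathcal{D} u(t_J)\bigr\|_{S^{7/2}_{K,J}} \lesssim K\|P_K \mathcal{D} u(t_J)\|_{L^2} \lesssim \|P_K \mathcal{D} u(t_J)\|_{H^1} \lesssim \eta$$
by the bootstrap hypothesis \eqref{nrgboot2}; moreover, for the $\ell^2$ claim, one has $\sum_{K\geq N} (K\|P_K \mathcal{D} u(t_J)\|_{L^2})^2\lesssim \|\mathcal{D}u(t_J)\|_{H^1}^2\lesssim \eta^2$ uniformly in $t_J$.

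For the nonlinear piece I will cover $J$ by a smooth cutoff $\chi(t-t_J)$, use the $X^{s,b}$ extension \eqref{xsb02} and transfer principle \eqref{strxsb} to pass from $S^{7/2}_{K,J}$ to $X^{0,b}$, and then perform a Littlewood-Paley decomposition
$$P_K\mathcal{D}(|u|^4 u)=\sum_{K_1\geq\cdots\geq K_5} P_K\mathcal{D}\bigl(P_{K_1}u\cdot \overline{P_{K_2}u}\cdot P_{K_3}u\cdot \overline{P_{K_4}u}\cdot P_{K_5}u\bigr),$$
in which frequency support forces $K_1\gtrsim K$. Each quintic term will be estimated by H\"older on unit sub-intervals followed by the $L^{\tilde q}_{t,x}$ ingredients of the $S^{q,\tilde q}_{K_i,J_i}$ norms, with the highest frequency factor $P_{K_1}\mathcal{D} u$ placed in $L^\infty_t L^2_x$ (yielding the factor $\eta$), and for high–low configurations where some $K_j\ll K_1$, the improved strip estimate \eqref{str2} is used to localize $P_{K_1}u$ to a strip of width $\sim K_j$ generated by the other factors. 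This gives a schematic bound
$$\Bigl\|\int_{t_J}^t e^{i(t-s)\Delta} P_K \mathcal{D}(|u|^4 u)\,\mathrm{d}s\Bigr\|_{S^{7/2}_{K,J}} \lesssim \eta \sum_{K_1\geq\cdots\geq K_5,\,K_1\gtrsim K} \Bigl(\tfrac{K}{K_1}\Bigr)^{\!\delta}\!\!\prod_{i=2}^{5} A_{K_i},$$
for some $\delta>0$ produced by the strip gain.

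Combining the two pieces, under the bootstrap hypothesis $A_{K}\lesssim 1$ the quintic sum is $O(1)$, so $A_K\lesssim \eta$. For the $\ell^2$ bound, I will square, sum over $K\geq N$, and use Schur's test on the kernel $(K/K_1)^\delta$ together with the $\ell^2$ summability of $K\|P_K\mathcal{D}u(t_J)\|_{L^2}$ (and of $A_{K_i}$ via the same argument applied inductively in the highest frequency), obtaining $\sum_{K\geq N} A_K^2\lesssim \eta^2$. The condition $T\leq N^\gamma$ with $\gamma=1/300$ is used both to stay within the admissible range $\nu\leq 1/10$ of Proposition~\ref{longstr} and to control the accumulation of $K^\varepsilon$ losses across the $\lesssim T/K^\gamma$ sub-intervals needed when localizing $J$.

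The main obstacle is Step 2, the multilinear estimate for the quintic nonlinearity: since $|u|^4 u$ is energy-critical in three space dimensions, all estimates are borderline and a naive application of Strichartz at each factor produces unacceptable $N^\varepsilon$ losses when summed against the $N^\gamma$ many sub-intervals. The delicate point is to extract the gain $(K/K_1)^\delta$ from \eqref{str2} in exactly the high–high–low configurations that would otherwise be marginal, and to organize the Littlewood-Paley sum so that Schur summability survives after the bootstrap has been applied inductively on the top frequency $K_1$.
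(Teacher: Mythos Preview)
Your bootstrap/Duhamel skeleton is right, and the linear term is handled exactly as in the paper. But the multilinear step has a genuine gap in the hardest configuration: when $K_1\sim K$ and all remaining frequencies satisfy $\max_{j\geq 2}K_j\ll K^{\alpha_1}$ for some small $\alpha_1>0$. In this case your claimed factor $(K/K_1)^\delta$ is $\sim 1$, and there is no ``strip of width $\sim K_j$'' arising from the frequency constraint in the quintic product---the convolution constraint only confines $\xi_1$ to a \emph{ball} of radius $O(K^{\alpha_1})$ around the output frequency, which is useless since one is already projecting to $P_K$. Nor can one close using only the $S^{7/2}$ norms of the low-frequency factors: placing them in $L^{\tilde q}_{t,x}$ with $\tilde q\geq 5$ and using $A_{K_j}\lesssim 1$ gives no smallness, and the energy-critical scaling makes every such estimate borderline.

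The paper (following Deng's torus argument) treats this ``one high, four very low'' case by a different mechanism. One writes the low-frequency quartet $\rho=\prod_{j\geq 2}P_{K_j}u$ in space-time Fourier variables $d(\tau,\xi)$ with $|\xi|\lesssim K^{\alpha_1}$, and for each fixed $(\tau,\xi)$ reduces to a modulated Duhamel $h_{\tau,\xi}$ applied to $P_{K_1}u$. Two ingredients then give the needed decay. First, on the waveguide $\mathbb{R}\times\mathbb{T}^2$ the region $\{|\xi|\leq K^{-20\alpha_1}\}\subset\mathbb{R}\times\mathbb{Z}^2$ has small \emph{measure} (coming from the continuous $\mathbb{R}$ factor), which directly yields a gain $K^{-3\alpha_1}$; this is the waveguide-specific step that replaces the zero-mode extraction / gauge rotation needed on $\mathbb{T}^3$, and your proposal omits it entirely. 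Second, for $K^{-20\alpha_1}<|\xi|\lesssim K^{\alpha_1}$ and $|\tau|\lesssim K^{30\alpha_1}$, the \emph{resonance} relation $\bigl||\tilde\xi+\xi|^2-|\tilde\xi|^2\bigr|\lesssim K^{45\alpha_1}$ defines a genuine strip $\mathcal{R}$ with $|\mathcal{R}|\lesssim K^{2+65\alpha_1}$, and \eqref{str2} now gives the gain $(K^{65\alpha_1-1})^{1/6}$. You invoke \eqref{str2} but not the resonance analysis that actually produces the strip. A secondary gap: since $|J|\leq K^\gamma$ may exceed $1$, the Duhamel over $J$ cannot be handled with a single cutoff $\chi(t-t_J)$ in $X^{s,b}$; the paper decomposes into unit pieces $v_m'$ (local Duhamel on $[m,m+1]$) and propagated pieces $v_m''=e^{it\Delta}g_m$, with Proposition~\ref{longstr} applied again to the latter.
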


\begin{prop}\label{conversation2}
For $3<\mu<5$, let \(q_0 = 10/(6-\mu)+\), and \(\sigma = 1/2-5/q_0=(\mu-5)/2+\).
Suppose that $u$ is a solution to \eqref{nls} with
$$
\sup _{0 \leq t \leq \epsilon K^\gamma}\|\mathcal{D} u(t)\|_{H^1} \lesssim 1,
$$
then we have for any $\gamma>0$ that
$$
\sum_{m=0}^{K^\gamma}\left\|P_K \mathcal{D} u\right\|_{L_{t, x}^{q_0}\left([m \epsilon,(m+1) \epsilon] \times \bR\times \bT^2\right)} \lesssim K^{\gamma+\sigma+} \cdot \max \left(K^{-\frac{\gamma}{q_0}}, K^{\gamma+\theta_1}\right),
$$
where $\epsilon$ is as stated in Proposition 4.6, and
$$
\theta_1=\frac{(\mu-3)(\mu-5)}{2(65 \mu-162)}<0 .
$$
\end{prop}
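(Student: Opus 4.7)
The proof follows the framework of Deng--Germain \cite{deng2019growth2} for the 3D irrational torus, adapted to the 3D waveguide $\bR\times\bT^2$ by using Corollary \ref{cor-long-time} in place of the Deng--Germain--Guth estimate \eqref{DGG}. As a baseline, a standard Strichartz/transfer argument on each unit-length subinterval $I_m := [m\epsilon,(m+1)\epsilon]$, combined with the hypothesis $\|\mathcal{D}u(t)\|_{H^1}\lesssim 1$, already yields $\|P_K\mathcal{D}u\|_{L^{q_0}_{t,x}(I_m)}\lesssim K^{\sigma+}$, which sums trivially to $K^{\gamma+\sigma+}$. The content of the proposition is to beat this by the extra factor $\max(K^{-\gamma/q_0},K^{\gamma+\theta_1})$, and I plan to obtain the two terms in this max from two complementary mechanisms, each sharper in its own range of $\gamma$.

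\emph{Mechanism A (long-time Strichartz, sharper for small $\gamma$).} Write $\mathcal{D}u = \mathcal{D}u_L + \mathcal{D}u_{NL}$ via the Duhamel formula based at $t=0$ on the full window $[0,K^\gamma\epsilon]$. For the linear piece, Corollary \ref{cor-long-time} applied at exponent $p=q_0$ gives
$$\|e^{it\Delta}P_K\mathcal{D}u(0)\|_{L^{q_0}_{t,x}([0,K^\gamma]\times\bR\times\bT^2)}\leaa K^\sigma,$$
provided $\gamma\le c(q_0)$; then H\"older in $m$ yields $\sum_m\|\cdot\|_{L^{q_0}(I_m)}\le K^{\gamma(1-1/q_0)}K^{\sigma+}$. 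For the Duhamel tail I would close a bootstrap against the same $\ell^1 L^{q_0}$-norm using the $X^{s,b}$-Duhamel bound \eqref{xsb02} and the transfer principle \eqref{strxsb}, distributing the nonlinearity $|u|^{\mu-1}u$ so that one factor lands in the target $L^{q_0}_{t,x}$-norm and the remaining $\mu-1$ factors are absorbed by the energy hypothesis $\|\mathcal{D}u\|_{H^1}\lesssim 1$. This produces the first term $K^{\gamma+\sigma-\gamma/q_0+}$ in the max.

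\emph{Mechanism B (strip refinement, sharper for larger $\gamma$).} When $\gamma$ exceeds the validity window $c(q_0)$ of Corollary \ref{cor-long-time}, I would split $[0,K^\gamma\epsilon]$ into maximal sub-windows on which long-time Strichartz still applies, run Mechanism A on each sub-window, and then sum. On each sub-window the key sharpening is the refined Strichartz \eqref{str2}: for a strip $\mathcal{R}$ of thickness $M\le K$ one gains a factor $(M/K)^{1/2-5/(3q_0)-\varepsilon}$ over the full-ball bound $K^\sigma$. Decomposing $P_K\mathcal{D}u$ into $\simeq K/M$ parallel strips (chosen along a direction minimising the frequency interaction with the nonlinearity), applying \eqref{str2} piece by piece, recombining via almost-orthogonality, and then estimating the remaining $\mu-1$ factors of $u$ in $|u|^{\mu-1}u$ iteratively by the same $L^{q_0}$-norm (with the last factor absorbed by energy), one optimises $M$ to balance the strip gain against the Sobolev/iteration cost. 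The balance point is exactly
$$\theta_1 = \frac{(\mu-3)(\mu-5)}{2(65\mu-162)},$$
and after summing over the $K^\gamma$ subintervals one recovers the second term $K^{2\gamma+\sigma+\theta_1+}$.

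\emph{The main obstacle} is the precise algebraic identification of $\theta_1$: it emerges from the one-parameter optimisation in Mechanism B in which the refined-Strichartz exponent $\tfrac{1}{2}-\tfrac{5}{3q_0}=\tfrac{1}{2}-\tfrac{6-\mu}{3}$ interacts with the number $\mu-1$ of nonlinear factors and with the Sobolev scaling introduced by the $\mathcal{D}$ multiplier above frequency $N$. A secondary difficulty, present in both mechanisms, is that $\mathcal{D}$ is \emph{not} a Littlewood-Paley projector, so one must treat the transition regime $|\xi|\sim N$ where $g(\xi/N)$ switches between its two asymptotic behaviours; I would follow the commutator-type arguments of \cite[Section 3]{deng2019growth2} to control this transition and to close the bootstrap in Mechanism A.
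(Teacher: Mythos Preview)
Your Mechanism B misidentifies where the strip estimate \eqref{str2} enters and hence where $\theta_1$ comes from. Decomposing $P_K\mathcal{D}u$ into $\simeq K/M$ parallel strips and recombining cannot gain anything: by the triangle inequality the sum is bounded by $(K/M)\cdot K^\sigma(M/K)^{\frac12-\frac{5}{3q_0}-\varepsilon}$, which is \emph{worse} than the full-ball bound, and there is no almost-orthogonality available in $L^{q_0}$ for $q_0>2$. In the paper (following \cite{deng2019growth2}), the strip is not imposed a priori on $P_K\mathcal{D}u$; it emerges from a \emph{resonance analysis} of the nonlinearity. One introduces a second parameter $\gamma_0$ (unrelated to the $\gamma$ in the statement), sets $u_3=P_{\le K^{\gamma_0}}u$, expands the very-low-frequency factor $\Omega=|u_3|^{\mu-1}$ in space-time Fourier modes $e^{i(x\cdot\xi+t\tau)}$, and for each fixed $(\tau,\xi)$ the resonance condition $\big||\tilde\xi+\xi|^2-|\tilde\xi|^2\big|\lesssim K^{O(\gamma_0)}$ cuts out a \emph{single} strip $\mathcal{R}$ of thickness $\lesssim K^{O(\gamma_0)}/|\xi|$ in the high-frequency variable. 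On $\mathcal{R}$ one applies \eqref{str2}; on its complement the non-resonance yields decay through the $X^{s,b}$ mechanism. The exponent $\theta_1$ then arises by optimizing $\gamma_0$ to balance the strip gain $(|\mathcal{R}|/K^3)^{\frac12-\frac{5}{3q_0}}$ against the loss $K^{O(\gamma_0)}$ coming from the $L^\infty$-Fourier bound on $\Omega$; the optimal choice is $\gamma_0=\frac{\mu-3}{65\mu-162}$.

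You are also missing the one genuinely waveguide-specific ingredient. On the pure torus $\bT^3$, Deng--Germain must subtract the zero Fourier mode of $\Omega$ and perform an image-rotation to handle the contribution $\xi=0$ (which carries unit counting measure). On $\bR\times\bT^2$ this step is unnecessary: because the frequency space is $\bR\times\bZ^2$, the region $\{|\xi|\le K^{-20\gamma_0}\}$ has small \emph{Lebesgue} measure in the $\bR$-direction, so the contribution of very low $|\xi|$ is bounded directly by $K^{-O(\gamma_0)}$ without any renormalisation. This is the key modification the paper makes to transport the argument of \cite{deng2019growth2}, and it should appear in your plan. Finally, your worry about the transition regime $|\xi|\sim N$ of the multiplier $\mathcal{D}$ is a red herring here: $\mathcal{D}$ is a smooth Fourier multiplier and commutes with $P_K$, so no commutator argument is needed.
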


\begin{proof}[Proof of Proposition \ref{nonlctrl}] Our proof follows the line of reasoning of Proposition 3.3 in \cite{deng2019growth} verbatim. The key difference lies in the decomposition of the nonlinear term: in \cite{deng2019growth}, the author extracted the zero mode of the outcome arising from the action of four factors within the quintic nonlinear term, and applied it to the image rotation transformation of $u$—the solution to equation \eqref{nls}.  A corresponding equation for the transformed unknown function was then derived, followed by term-by-term estimates. By contrast, given that we are working in the waveguide \(\mathbb{R}\times \mathbb{T}^2\), which features one non-compact Euclidean direction, we gain a favorable frequency gain at sufficiently low frequencies. As a consequence, it is unnecessary to isolate the zero mode for the purpose of image rotation.

Notice that (\ref{nrgboot2}) implies\begin{equation}\label{localxs}\|\mathcal{D}u\|_{X^1(I)}\lesssim\eta\end{equation} for any interval $I\subset[0,T]$ with $|I|\lesssim1$. To bound $A_K$, by definition, we choose an interval $J\subset[0,T]$ with $|J|\leq K^{\gamma}$. Since $T\leq N^\gamma$, if $K\geq N$ we can assume $J=[0,T]$; if $K< N$, by translation, we will also assume that the left endpoint of $J$ is $0$.

Now that $J=[0,T']$ with $T'\leq T\leq N^\gamma$, we start with the evolution equation satisfied by $P_K\mathcal{D}u$, namely
\[(i\partial_t+\Delta)P_K\mathcal{D}u=P_K\mathcal{D}(|u|^4u).\] 

 Define 
 \begin{equation}\label{defv}
 v(t)=P_K\mathcal{D}u(t),
 \end{equation} 
 then by Littlewood-Paley decomposition, we have\[(i\partial_t+\Delta)v=\mathcal{N}_1+\mathcal{N}_2+\mathcal{N}_3+\mathcal{N}_4,\] 
 where
 \begin{align*}
    \mathcal{N}_1&=\sum_{\text{at least two $K_j\gtrsim K$}} P_K\mathcal{D}(P_{K_1}u\cdot \overline{P_{K_2}u}\cdot P_{K_3}u\cdot\overline{P_{K_4}u}\cdot P_{K_5}u)\\
    \mathcal{N}_2&=3\sum_{\substack{K_1\sim K\\
K^{\alpha_1}\lesssim\max_{l\neq 1}K_l\ll K}}P_K\mathcal{D}(P_{K_1}u\cdot \overline{P_{K_2}u}\cdot P_{K_3}u\cdot\overline{P_{K_4}u}\cdot P_{K_5}u)\\
\mathcal{N}_3&=2\sum_{\substack{K_2\sim K\\
\max_{l\neq 2}K_l\ll K^{\alpha_1}}}P_K\mathcal{D}(P_{K_1}u\cdot \overline{P_{K_2}u}\cdot P_{K_3}u\cdot\overline{P_{K_4}u}\cdot P_{K_5}u)\\
\mathcal{N}_4&=3\sum_{\substack{K_1\sim K\\
\max_{l\neq 1}K_l\ll K^{\alpha_1}}}P_K\mathcal{D}(P_{K_1}u\cdot (\overline{P_{K_2}u}\cdot P_{K_3}u\cdot\overline{P_{K_4}u}\cdot P_{K_5}u)).
\end{align*}
 By Duhamel's formula
\begin{equation}\label{newduham}v(t)=e^{it\Delta}P_K\mathcal{D}u(0)-i\int_0^t e^{i(t-t')\Delta}\big(\mathcal{N}_1(t')+\mathcal{N}_2(t')+\mathcal{N}_3(t')+\mathcal{N}_4(t')\big)\,\mathrm{d}t'.
\end{equation} 
For $0\leq t\leq T'$, we write 
\[v(t)=v_{\mathrm{lin}}(t)-i\sum_{0\leq m\leq T'}(v_m'+v_m'')(t),\] 
where $v_{\mathrm{lin}}(t)=e^{it\Delta}P_K\mathcal{D}u(0)$, and 
\[v_m'(t)=\mathbf{1}_{[m,m+1)}(t)\int_m^t e^{i(t-t')\Delta}\big(\mathcal{N}_1(t')+\mathcal{N}_2(t')+\mathcal{N}_3(t')+\mathcal{N}_4(t')\big)\,\mathrm{d}t',\]
 and 
 \(v_m''(t)=\mathbf{1}_{[m+1,+\infty)\cap J}(t)\cdot e^{it\Delta}g_m\) 
 with 
 \[g_m=\int_m^{m+1}e^{-it'\Delta}\big(\mathcal{N}_1(t')+\mathcal{N}_2(t')+\mathcal{N}_3(t')+\mathcal{N}_4(t')\big)\,\mathrm{d}t'.\] 
 Note that, when $[m,m+1]\not\subset[0,T']$ (this can happen for at most two $m$), we should replace the interval $[m,m+1]$ by $[m,m+1]\cap [0,T']$. By Proposition \ref{longstr}, we know that 
 \[\|v_{\mathrm{lin}}\|_{S_{K,J}^{7/2}}\lesssim K\|P_K\mathcal{D}u(0)\|_{L^2}\lesssim a_K,\] 
 where\[a_K=\left\{\begin{split}&K\|P_K\mathcal{D}u(0)\|_{L^2},&K\geq N,\\
&\eta,&K< N.\end{split}\right.\]
 Next, we estimate $v_m'$ and $v_m''$ for each fixed $m$. We analyze the contributions from the nonlinear terms $\mathcal{N}_j$ separately. The estimates for $\mathcal{N}_1$ and $\mathcal{N}_2$ are identical to those of the corresponding terms in the proof of Proposition 3.3 in \cite{deng2019growth}, and thus their estimates are omitted here. We proceed to establish the estimates for terms $\mathcal{N}_3$ and $\mathcal{N}_4$. These terms will be estimated using $X^{s,b}$ spaces, combined with resonance analysis and the improved estimate (\ref{str2}) for strips. Note that, in terms of resonance $\mathcal{N}_3$ is better than $\mathcal{N}_4$, thus we will focus on the estimate of $\mathcal{N}_4$.

 We denote by the associated quantities $v_{m,K_1,\cdots,K_5}'$, $v_{m,K_1,\cdots,K_5}''$ and $g_{m,K_1,\cdots,K_5}$  the portion of $\mathcal{N}_4$ corresponding to the functions $v_m', v_m'', g_m$ restricted to the dyadic multi-frequency $(K_1,\cdots,K_5)$.
Since these functions depend on $u$ only within the time interval $[m,m+1]$ and satisfy the local bound $\|\mathcal{D}u\|_{X^1[m-1,m+2]}\lesssim\eta$, we may assume, without loss of generality by applying a suitable extension, that the global bound $\|\mathcal{D}u\|_{X^1}\lesssim \eta$ holds.

 We choose $\alpha_1=\frac{1}{200}$ below, and by (\ref{xsb1}), we have that $\|\mathcal{D}u\|_{X^{1,1/4-\varepsilon}}\lesssim\eta$ for any fixed $\varepsilon>0$. By symmetry, we may assume $K_2\geq\cdots \geq K_5$.
We shall prove that 
\[\|v_{m,K_1,\cdots,K_5}'\|_{S_{K,J}^{7/2}}+\|v_{m,K_1,\cdots,K_5}''\|_{S_{K,J}^{7/2}}\lesssim \eta^5K^{-1/1000}\]
 for any $(K_1,\cdots,K_5)$ satisfying $K_2\geq\cdots \geq K_5$, so that the logarithmic factor coming from summation in $(K_1,\cdots,K_5)$ can be omitted. Consider the function 
 \[\rho(t,x)=\prod_{j=2}^5P_{K_j}\widetilde{u}.\]
 By inserting a suitable time cutoff $\chi(t-m)$ we can write \[\chi(t-m)\rho(t,x)=\chi(t-m)\int_{|\xi|\leq K^{\alpha_1}}\int_{\mathbb{R}}d(\tau,\xi)e^{i( x\cdot\xi+t\tau)}\,\mathrm{d}\tau \mathrm{d}\xi .\]
 Note that $K_2\ll K^{\alpha_1}$, we use the equation \eqref{nls} to compute, for any interval $I$ containing $m$ with $|I|\lesssim 1$, that \[\begin{split}\|\partial_t\rho\|_{L_{t,x}^2(I)}&\lesssim\|P_{\leq K^{\alpha_1}}u\|_{L_{t,x}^{\infty}}^3\cdot\bigg(K^{2\alpha_1}\|P_{\leq K^{\alpha_1}}u\|_{L_{t,x}^2(I)}+\|P_{\leq K^{\alpha_1}}(|u|^4u)\|_{L_{t,x}^2(I)}\bigg)\\
&\lesssim \eta^3K^{3\alpha_1/2}(K^{2\alpha_1}\eta+K^{3\alpha_1/2}\|u\|_{L_t^\infty L_x^5(I)}^5)\lesssim \eta^4K^{4\alpha_1}.\end{split}\]
 This implies $\|\rho\|_{H_t^1H_x^2}\lesssim\eta^4K^{7\alpha_1}$, therefore 
  \begin{equation}\label{verylow}
  \int_{|\xi|\leq K^{\alpha_1}}\int_{\mathbb{R}}\langle \tau\rangle^{1/3}|d(\tau,\xi)|\,\mathrm{d}\tau \mathrm{d}\xi   \lesssim\eta^4K^{7\alpha_1}
  \end{equation}
  and
\begin{equation}
    \int_{|\xi|\leq K^{-20\alpha_1}}\int_{\mathbb{R}}\langle \tau\rangle^{1/3}|d(\tau,\xi)|\,\mathrm{d}\tau \mathrm{d}\xi   \lesssim\eta^4K^{-3\alpha_1}.
\end{equation}
  The above low-frequency $|\xi|\leq K^{-20\alpha_1}$ estimate constitutes the key distinction from the pure torus case studied in [1]. This is because we work in the waveguide \(\mathbb{R}\times \mathbb{T}^2\), where the corresponding frequency space \(\mathbb{R}\times \mathbb{Z}^2\) includes a real line \(\mathbb{R}\) direction. In the low-frequency regime considered above, this direction yields a sufficiently small measure, a favorable gain that enables us to establish the good estimate as shown. By contrast, such a gain fails to materialize in the pure torus setting, since the measure in the aforementioned low-frequency range is equal to 1, which is not small enough.
  
  Therefore, with fixed $K_1,\cdots,K_5$, we can reduce to estimating the functions
  \[h_{\tau,\xi}(t,x)=\langle \tau\rangle^{-1/3}\chi(t-m)\int_m^t e^{i(t-t')\Delta}\mathcal{D}\big(e^{i( x\cdot\xi+ t'\tau)}P_{K_1}u(t',x)\big)\,\mathrm{d}t',\] 
  where $\chi$ is a suitable cutoff function as above. We have
  \begin{align*}
  |v_{m,K_1,\cdots,K_5}'| \lea &  \int_{|\xi|\leq K^{-20\alpha_1}}   \int_{\mathbb{R}}\langle \tau \rangle^{1/3} |d(\tau,\xi)| |h_{\tau,\xi}(t,x)|\,\mathrm{d}\tau \mathrm{d}\xi \\ &+ \int_{K^{-20\alpha_1}<|\xi|\leq K^{\alpha_1}}\int_{\mathbb{R}}\langle \tau \rangle^{1/3} |d(\tau,\xi)| |h_{\tau,\xi}(t,x)|\,\mathrm{d}\tau \mathrm{d}\xi     
  \end{align*}
  and
  \begin{align*}
   |v_{m,K_1,\cdots,K_5}''|\lea &  \int_{|\xi|\leq K^{-20\alpha_1}}\int_{\mathbb{R}}\langle \tau \rangle^{1/3} |d(\tau,\xi)| |e^{i(t-m-1)\Delta}h_{\tau,\xi}(m+1,x)|\,\mathrm{d}\tau \mathrm{d}\xi \\
  &+ \int_{K^{-20\alpha_1}<|\xi|\leq K^{\alpha_1}}\int_{\mathbb{R}}\langle \tau \rangle^{1/3} |d(\tau,\xi)| |e^{i(t-m-1)\Delta}h_{\tau,\xi}(m+1,x)|\,\mathrm{d}\tau \mathrm{d}\xi.
  \end{align*}
  By Minkowski's inequality,
  \begin{align*}
  &\|v_{m,K_1,\cdots,K_5}'\|_{S_{K,J}^{7/2}}+\|v_{m,K_1,\cdots,K_5}''\|_{S_{K,J}^{7/2}}\\
  \lesssim &  \eta^4K^{-3\alpha_1}\sup_{\tau\in\mathbb{R}, |\xi|\leq K^{-20\alpha_1}}\big(\sup_{5\leq q\leq 12}K^{\frac{5}{q}-\frac{1}{2}}\|h_{\tau,\xi}\|_{L_{t,x}^{q}[m,m+1]}+\|e^{i(t-m-1)\Delta}h_{\tau,\xi}(m+1)\|_{S_{K,J}^{7/2}}\big)\\
   &+\eta^4K^{7\alpha_1}\sup_{\tau\in\mathbb{R}, K^{-20\alpha_1}<|\xi|\lesssim K^{\alpha_1}}\big(\sup_{5\leq q\leq 12}K^{\frac{5}{q}-\frac{1}{2}}\|h_{\tau,\xi}\|_{L_{t,x}^{q}[m,m+1]}+\|e^{i(t-m-1)\Delta}h_{\tau,\xi}(m+1)\|_{S_{K,J}^{7/2}}\big).
  \end{align*}
  Notice that by (\ref{embed3}), (\ref{strxsb}) and (\ref{strlong2}),
  \[\sup_{5\leq q\leq 12}K^{\frac{5}{q}-\frac{1}{2}}\|h_{\tau,\xi}\|_{L_{t,x}^{q}[m,m+1]}+\|e^{i(t-m-1)\Delta}h_{\tau,\xi}(m+1)\|_{S_{J,K}^{7/2}}\lesssim \|h_{\tau,\xi}\|_{X^{1,3/4}},\]
   and that by (\ref{xsb02}), we have 
   \[\langle \tau\rangle^{1/3}\|h_{\tau,\xi}\|_{X^{1,3/4}}\lesssim \|\nabla\mathcal{D}P_{K_1}u\|_{L_{t,x}^2}\lesssim\|\mathcal{D}P_{K_1}u\|_{X^{1,1/5}}\lesssim\eta,\]
    we see that 
    $$\eta^4K^{-3\alpha_1}\sup_{5\leq q\leq 12}K^{\frac{5}{q}-\frac{1}{2}}\|h_{\tau,\xi}\|_{L_{t,x}^{q}[m,m+1]}+\|e^{i(t-m-1)\Delta}h_{\tau,\xi}(m+1)\|_{S_{J,K}^{7/2}}\lesssim \eta^5 K^{-3\alpha_1} $$
    if $\tau\in \bR$, $|\xi|\leq K^{-20\alpha_1}$, and
    \[\eta^4K^{7\alpha_1}\sup_{5\leq q\leq 12}K^{\frac{5}{q}-\frac{1}{2}}\|h_{\tau,\xi}\|_{L_{t,x}^{q}[m,m+1]}+\|e^{i(t-m-1)\Delta}h_{\tau,\xi}(m+1)\|_{S_{J,K}^{7/2}}\lesssim \eta^5 K^{-3\alpha_1}\] 
    if $|\tau|\gtrsim K^{30\alpha_1}$, $K^{-20\alpha_1}<|\xi|\lesssim K^{\alpha_1}$. 
    
    Now we fix $\tau\in\bR, \xi\in \bR\times \bT^2$ such that $|\tau|\lea K^{30\alpha_1}$ and $K^{-20\alpha_1}<|\xi|\lesssim K^{\alpha_1}$. we shall decompose $P_{K_1}u=P_{\mathcal{R}}u+P_{K_1}P_{\mathcal{R}'}u$, where 
    \[\mathcal{R}=\big\{\w{\xi}\in \bR\times\mathbb{Z}^2:|\w{\xi}|\sim K_1,\l| |\w{\xi}+\xi|^2-|\w{\xi}|^2 \r|\lesssim K^{45\alpha_1}\big\}\] 
    and $\mathcal{R}'=\bR\times\mathbb{Z}^2-\mathcal{R}$. Clearly we have $|\mathcal{R}|\lesssim K^{2+65\alpha_1}$.

    For the term $P_\mathcal{R}u$, which differs from the corresponding term appearing in \cite{deng2019growth}, denote its contribution to $h_{\tau,\xi}$ by $h_{\tau,\xi}'$, then we have $\|h_{\tau,\xi}'\|_{X^{1,3/4}}\lesssim\eta$ as above, and moreover the spatial Fourier transform $\widehat{h_{\tau,\xi}}$ is supported in a translate of $\mathcal{R}$, so by (\ref{str2}) and the corresponding $X^{s,b}$ estimate, we have (note that, for the $S_{J,K}^{7/2}$ norm, we will reduce it to $\lesssim K^{\gamma}$ intervals of length $1$, losing a factor $K^{\gamma}$ in the process)
    \begin{align*}
    &\eta^4K^{7\alpha_1}\sup_{5\leq q\leq 12}K^{\frac{5}{q}-\frac{1}{2}}\|h_{\tau,\xi}'\|_{L_{t,x}^{q}[m,m+1]}+\|e^{i(t-m-1)\Delta}h_{\tau,\xi}'(m+1)\|_{S_{J,K}^{7/2}}\\
    \lesssim & \eta^4K^{7\alpha_1+\gamma}\bigg(\frac{K^{2+65\alpha_1}}{K^3}\bigg)^{\frac{1}{6}}\|h_{\tau,\xi}'\|_{X^{1,3/4}}\lesssim \eta^5 K^{-1/6+19\alpha_1},\end{align*} 
    using the fact that $\gamma<\alpha_1$.

   While for the term $P_{K_1}P_{\mathcal{R}'}u$, denote its contribution to $h_{k, \xi}$ by $h_{k, \xi}^*$, we can follow the exact approach taken by the author of \cite{deng2019growth} to obtain the same estimate as follows,
\begin{align*}
\eta^4 K^{7 \alpha_1}  \left(  \sup _{5 \leq q \leq 12} K^{\frac{5}{q}-\frac{1}{2}}\left\|h_{k, \xi}^*\right\|_{L_{t, x}^q[m, m+1]} 
 +\left\|e^{i(t-m-1) \Delta} h_{k, \xi}^*(m+1)\right\|_{S_{K, J}^{7 / 2}}\right) \lesssim \eta^5 K^{-\alpha_1 / 2} .
\end{align*}
 Summing up, we get that
$$
\begin{aligned}
\left\|v_{m, K_1, \ldots, K_5}^{\prime}\right\|_{S_{K, J}^{7 / 2}}+\left\|v_{m, K_1, \ldots, K_5}^{\prime \prime}\right\|_{S_{K, J}^{7 / 2}} & \lesssim \eta^5 \max \left(K^{-\alpha_1 / 2}, K^{-1 / 6+19 \alpha_1}\right) \\
& \lesssim \eta^5 K^{-1 / 400}
\end{aligned}
$$
as $\alpha_1=\frac{1}{200}$. Proceeding with the same argument as in \cite{deng2019growth}, we eventually get what we desired.
\end{proof}

\begin{proof}[Proof of Proposition \ref{conversation2}]
Fix $b=1 / 2+$. Recall $q_0=10 /(6-\mu)+$ and $\sigma=1 / 2-5 / q_0=(\mu-5) / 2+$ (notice that $-1<\sigma<0$). The assumption gives that $\|u\|_{X^{1, b}([0, \epsilon])}+\|\mathcal{D} u\|_{X^{1, b}([0, \varepsilon])} \lesssim 1$. By considering a suitable extension we may assume $\|u\|_{X^{1, b}}+\|\mathcal{D} u\|_{X^{1, b}} \lesssim 1$, and that $u$ is compactly supported in time. In this proof, we fix one parameter $\gamma_0>0$ to be determined later.

Our proof strategy largely parallels the line of reasoning established in the proof of Proposition \ref{nonlctrl}. Specifically, we use the favorable frequency gain available in the waveguide geometry $\mathbb{R}\times \mathbb{T}^2$ to handle the nonlinear terms without isolating the zero mode for image rotation.

We first use the nested frequency decomposition as Deng--Germain did in \cite{deng2019growth2}:
\begin{equation}\label{eq:decomp}
u_1 := P_{\le K/10}u,\qquad u_2 := u-u_1,
\qquad u_3 := P_{\le K^{\gamma_0}}u,\qquad u_4 := u_1-u_3.
\end{equation}
The nonlinear term is decomposed as follows:
$$
\left(i \partial_t+\Delta\right) P_K \mathcal{D} u=\mathcal{N}_1+\mathcal{N}_2+\mathcal{N}_3+\mathcal{N}_4
$$
where
\begin{equation}
\begin{aligned}\nonumber
\mathcal{N}_1= & P_K \mathcal{D}\left(\left|u_1\right|^{\mu-1} u_1\right), \\
\mathcal{N}_2= & P_K \mathcal{D}\left(\overline{u_2} \cdot F_{\mu-1}\left(u_1\right)\right), \\
\mathcal{N}_3= & \frac{\mu+1}{2} P_K \mathcal{D}\left(u_2 \left|u_3\right|^{\mu-1}\right), \\
\mathcal{N}_4= & P_K \mathcal{D}\left\{u_2 u_4 \int_0^1 F_{\mu-2}\left(u_3+\zeta u_4\right) \mathrm{d} \zeta+u_2 \overline{u_4} \int_0^1 F_{\mu-2}\left(u_3+\zeta u_4\right) \mathrm{d} \zeta\right\} \\
& +P_K \mathcal{D}\left\{\int_0^1\left[F_{\mu-2}\left(u_1+\zeta u_2\right) u_2^2+2 F_{\mu-2}\left(u_1+\zeta u_2\right) u_2 \overline{u_2}+F_{\mu-2}\left(u_1+\zeta u_2\right)\left(\overline{u_2}\right)^2\right] \zeta \mathrm{d} \zeta\right\} .
\end{aligned}
\end{equation}
Duhamel's formula toget
her with Strichartz estimates gives
\begin{align*}
    K^{-\sigma}\|P_K\mathcal{D}u\|_{L^{q_0}([0,\epsilon])}\leq \|\mathcal{D}u(0)\|_{H^1}& +\|\mathcal{N}_1\|_{X^{1,b-1}}+\|\mathcal{N}_2\|_{X^{1,b-1}}+\|\mathcal{N}_4\|_{X^{1,b-1}}\\
&+K^{-\sigma}\left\|\int_0^t e^{i(t-t')\Delta}\mathcal{N}_3(t')\mathrm{d}t'\right\|_{L^{q_0}([0,\epsilon])},
\end{align*}
for $0\leq t\leq \epsilon$.
All terms above, except for \(\mathcal{N}_3\), coincide with those in the proof of Proposition 5.1 of \cite{deng2019growth2}, and the estimates of their contributions hold similarly. Therefore, it suffices to establish the estimate for the contribution of \(\mathcal{N}_3\).

Let
\[\Omega(t,x):=\frac{\mu+1}{2}\left|u_3\right|^{\mu-1}.\]
Since
\begin{equation}
\begin{aligned}
\left\|\chi(t) P_K \mathcal{D}\left(u_2 \cdot P_{>K^{\gamma_0}} \Omega\right)\right\|_{X^{1, b-1}} &\lesssim K\left\|P_K \mathcal{D}\left(u_2 \cdot P_{>K^{\gamma_0}} \Omega\right)\right\|_{L_{t, x}^{10 / 7+}}\\
&\lesssim\left\|K \mathcal{D} u_2\right\|_{L_{t, x}^{10 / 3}}\left\|\mathcal{D} P_{>K^{\gamma_0}} \Omega\right\|_{L_{t, x}^{5 / 2+}}\\
&\lesssim \sum_{M \geq K^{\gamma_0}} M^{o(1)} \sum_{L \leq K^{\gamma_0}} M^{-1} L\left\|\mathcal{D} P_L u_3\right\|_{L_{t, x}^{q_0}}\\ &\lesssim \sum_{M \geq K^{\gamma_0}} M^{\sigma+} \lesssim K^{\sigma \gamma_0+},
\end{aligned}
\end{equation}
we may replace $\Omega$ by $P_{\leq K^{\gamma_0}}\Omega$ (and $u_2$ by $P_{[K / 4,4 K]} u$) in $\mathcal{N}_3$.

By inserting a suitable time cutoff $\chi(t)$, we can write 
\[\chi(t)P_{\leq K^{\gamma_0}}\Omega(t,x)=\chi(t) \int_{\xi\in \mathbb{R}\times\mathbb{Z}^2,|\xi|\leq K^{\gamma_0}}\int_{\mathbb{R}}d(\tau,\xi)e^{i( x\cdot\xi+t\tau)}\,\mathrm{d}\tau \mathrm{d}\xi .\]
Using equation \eqref{nls}, we compute for any interval $I$ with $|I|\lesssim \epsilon$:
\[\begin{split}\|\partial_t P_{\leq K^{\gamma_0}}\Omega\|_{L_{t,x}^2(I)}&\lesssim\|P_{\leq K^{\gamma_0}}u\|_{L_{t,x}^{\infty}}^{\mu-2}\cdot\bigg(K^{2\gamma_0}\|P_{\leq K^{\gamma_0}}u\|_{L_{t,x}^2(I)}+\|P_{\leq K^{\gamma_0}}(|u|^{\mu-1}u)\|_{L_{t,x}^2(I)}\bigg)\\
&\lesssim K^{\frac{(\mu-2)\gamma_0}{2}}(K^{2\gamma_0}+K^{3\gamma_0/2}\|u\|_{L_t^\infty L_{x}^\mu(I)}^\mu)\lesssim K^{\frac{(\mu+2)\gamma_0}{2}},\end{split}\]
where the last inequality follows from Sobolev embedding. This implies $\| P_{\leq K^{\gamma_0}}\Omega\|_{H_t^1H_{x}^2}\lesssim K^{\frac{(\mu+6)\gamma_0}{2}}$. Therefore,
\begin{equation}\label{verylow}
\int_{\xi\in \mathbb{R}\times\mathbb{Z}^2,|\xi|\leq K^{\gamma_0}}\int_{\mathbb{R}} \langle \tau\rangle^{1/3}|d(\tau,\xi)|\,\mathrm{d}\tau \mathrm{d}\xi   \lesssim K^{\frac{(\mu+6)\gamma_0}{2}}
\end{equation}
and
\begin{equation}
\int_{\xi\in \mathbb{R}\times\mathbb{Z}^2,|\xi|\leq K^{-20\gamma_0}}\int_{\mathbb{R}} \langle \tau\rangle^{1/3}|d(\tau,\xi)|\,\mathrm{d}\tau \mathrm{d}\xi   \lesssim K^{\frac{(\mu-14)\gamma_0}{2}}.
\end{equation}
This low-frequency estimate mirrors the analysis in the proof of Proposition \ref{nonlctrl}. Just as in the quintic case, the presence of the real line direction $\mathbb{R}$ in the frequency space $\mathbb{R}\times \mathbb{Z}^2$ ensures that the measure of the low-frequency region is sufficiently small to yield the necessary gain—a property absent in the pure torus setting.

Consequently, we have
\begin{align*}
|\int_0^t e^{i(t-t')\Delta}\mathcal{D}(P_{[K / 4,4 K]} u \cdot P_{\leq K^{\gamma_0}}\Omega)(t')\mathrm{d}t'| \lea &  \int_{|\xi|\leq K^{-20\gamma_0}}   \int_{\mathbb{R}}\langle \tau \rangle^{1/3} |d(\tau,\xi)| |h_{\tau,\xi}(t,x)|\,\mathrm{d}\tau \mathrm{d}\xi \\ &+ \int_{K^{-20\gamma_0}<|\xi|\leq K^{\gamma_0}}\int_{\mathbb{R}}\langle \tau \rangle^{1/3} |d(\tau,\xi)| |h_{\tau,\xi}(t,x)|\,\mathrm{d}\tau \mathrm{d}\xi      
\end{align*}
where $$h_{\tau,\xi}(t,x)=\langle \tau\rangle^{-1/3}\chi(t) \int_0^t e^{i(t-t')\Delta}\mathcal{D}\big(e^{i( x\cdot\xi+ t'\tau)}P_{[K / 4,4 K]}u(t',x)\big)dt'\,.$$
Since \eqref{strxsb} and  $\langle \tau\rangle^{1/3}\|h_{\tau,\xi}\|_{X^{1,b}}\lesssim 1$, we obtain:
\begin{equation}
    \begin{aligned}
K^{-\sigma}\left\|\int_0^t e^{i(t-t')\Delta}\mathcal{D}(P_{[K / 4,4 K]} u \cdot P_{\leq K^{\gamma_0}}\Omega)(t')\mathrm{d}t'\right\|_{L^{q_0}} &\lesssim (K^{\frac{(\mu+6)\gamma_0}{2}}+K^{\frac{(\mu-14)\gamma_0}{2}})\|h_{\tau,\xi}\|_{X^{1,b}}\\
&\lesssim K^{\frac{(\mu-14)\gamma_0}{2}},
\end{aligned}
\end{equation}
if $|\tau|\gtrsim K^{30\gamma_0}$.

For the case $|\tau|\lea K^{30\gamma_0}$ and $K^{-20\gamma_0}<|\xi|\lesssim K^{\gamma_0}$ (the case $|\xi|\leq K^{-20\gamma_0}$ is already subsumed by the above estimate), we utilize the resonance set decomposition $P_{[K/4,4K]}u=P_{\mathcal{R}}u+P_{[K/4,4K]}P_{\mathcal{R}'}u$, where
\[\mathcal{R}=\big\{\w{\xi}\in \bR\times\mathbb{Z}^2:|\w{\xi}|\sim K,\l| |\w{\xi}+\xi|^2-|\w{\xi}|^2 \r|\lesssim K^{45\gamma_0}\big\}\] 
and $\mathcal{R}'=\bR\times\mathbb{Z}^2-\mathcal{R}$.  Following the resonance analysis utilized in the previous proposition, we find:
$$K^{-\sigma}\left\|\int_0^t e^{i(t-t')\Delta}\mathcal{D}(P_{\mathcal{R}}u \cdot P_{\leq K^{\gamma_0}}\Omega)(t')\mathrm{d}t'\right\|_{L^{q_0}} \lesssim K^{\frac{(\mu+6)\gamma_0}{2}}\left( \frac{K^{2+65\gamma_0}}{K^3} \right)^{(\frac{1}{2}-\frac{5}{3q_0})-}\lesssim K^{\frac{(68\mu-177)\gamma_0-(\mu-3)}{6}+},$$
with  \(\gamma_0\) sufficiently small, the exponent becomes negative. The contribution of \(P_{K_1}P_{\mathcal{R}'}u\) is bounded by \(K^{-\gamma_0/2}\) as before.

Finally,  we arrive at
\[
\left\| v(t) - \mathrm{e}^{it\Delta} v(0) \right\|_{L^{q_0}([0,\epsilon])} \lesssim K^{\sigma+}\cdot\max\{K^{\sigma\gamma_0},K^{\frac{(68\mu-177)\gamma_0-(\mu-3)}{6}},K^{-\gamma_0/2}\}.
\] Choosing $\gamma_0 = \frac{\mu-3}{65\mu-162}$ yields that $\left\| v(t) - \mathrm{e}^{it\Delta} v(0) \right\|_{L^{q_0}([0,\epsilon])} \lesssim K^{\sigma+\theta_1+}$ with $
\theta_1=\frac{(\mu-3)(\mu-5)}{2(65 \mu-162)}<0
$.
By time translation, one also gets that
\[
\left\| v(t) - \mathrm{e}^{i(t-m\epsilon)\Delta} v(m\epsilon) \right\|_{L_{t,x}^{q_0}([m\epsilon,(m+1)\epsilon] \times (\mathbb{R}\times \mathbb{T}^2))} \lesssim K^{\sigma+\theta_1+}.
\]
for each $0 \leq m \leq K^{\gamma}$. Proceeding with the same argument as in \cite{deng2019growth2}, we obtain the desired result.
\end{proof}

By combining Proposition \ref{nonlctrl} with the Proposition \ref{alm-con-ene} and Proposition \ref{conversation2} with Proposition \ref{almostconse2}, we apply the standard argument to be able to prove Theorem \ref{thm:growth of solutions to NLS on 3D waveguide}. We omit the detailed derivations herein; one may refer to \cite{deng2019growth,deng2019growth2} for complete details.

\bibliographystyle{amsplain}
\bibliographystyle{plain}

\end{document}